\newcommand{\cdummy}{\cdot}
\newcommand{\tmmathbf}{\mathbf}
\newtheorem{Theorem}{Theorem}[section]
\newtheorem{Proposition}[Theorem]{Proposition}
\newtheorem{Lemma}[Theorem]{Lemma}
\newtheorem{Corollary}[Theorem]{Corollary}
\theoremstyle{definition}
\newtheorem{Definition}[Theorem]{Definition}
\newtheorem{Remark}[Theorem]{Remark}
\newcommand{\bTheorem}[1]{
\begin{Theorem} \label{T#1} }
\newcommand{\eT}{\end{Theorem}}
\newcommand{\bProposition}[1]{
\begin{Proposition} \label{P#1}}
\newcommand{\eP}{\end{Proposition}}
\newcommand{\bLemma}[1]{
\begin{Lemma} \label{L#1} }
\newcommand{\eL}{\end{Lemma}}
\newcommand{\bCorollary}[1]{
\begin{Corollary} \label{C#1} }
\newcommand{\eC}{\end{Corollary}}
\newcommand{\dxt}{\,\mathrm{d}x\,\mathrm{d}t}
\newcommand{\diver}{\mathrm{div}}
\newcommand{\pas}{$\mathbb{P}$-a.s.}
\newcommand{\torN}{Q}
\newcommand{\intTN}[1]{\int_{\torN} #1 \ \dx}
\newcommand{\bRemark}[1]{
\begin{Remark} \label{R#1} }
\newcommand{\eR}{\end{Remark}}
\newcommand{\bDefinition}[1]{
\begin{Definition} \label{D#1} }
\newcommand{\eD}{\end{Definition}}
\newcommand{\Q}{Q}
\newcommand{\tvW}{\tilde{W}}
\newcommand{\dif}{\mathrm{d}}
\newcommand{\mf}{\mathfrak{F}}
\newcommand{\prst}{\mathbb{P}}
\newcommand{\p}{\mathbb{P}}
\newcommand{\mt}{Q}
\newcommand{\bfu}{\mathbf{u}}
\newcommand{\bfg}{\mathbf{g}}
\newcommand{\bfF}{\mathbf{F}}
\newcommand{\bfphi}{\boldsymbol{\varphi}}
\newcommand{\bfvarphi}{\boldsymbol{\varphi}}
\newcommand{\dd}{\mathrm{d}}
\newcommand{\bFormula}[1]{
\begin{equation} \label{#1}}
\newcommand{\eF}{\end{equation}}
\newcommand{\N}{\mathbb{N}}
\newcommand{\Ov}[1]{\overline{#1}}
\newcommand{\aleq}{\lesssim}
\newcommand{\vr}{\varrho}
\newcommand{\tvr}{\tilde \vr}
\newcommand{\tvu}{{\tilde \vu}}
\newcommand{\vu}{\vc{u}}
\newcommand{\vm}{\vc{m}}
\newcommand{\vc}[1]{{\bf #1}}
\newcommand{\Div}{{\rm div}_x}
\newcommand{\Grad}{\nabla_x}
\newcommand{\dx}{\,{\rm d} {x}}
\newcommand{\dt}{\,{\rm d} t }
\newcommand{\intQ}[1]{\int_{{\Q}} #1 \ \dx}
\newcommand{\D}{{\rm d}}
\newcommand{\ep}{\varepsilon}
\newcommand{\R}{\mathbb{R}}
\newcommand{\expe}[1]{ \mathbb{E} \left[ #1 \right] }
\newcommand{\tmop}{\text}
\def\softd{{\leavevmode\setbox1=\hbox{d}%
          \hbox to 1.05\wd1{d\kern-0.4ex{\char039}\hss}}}
\definecolor{Cgrey}{rgb}{0.85,0.85,0.85}
\definecolor{Cblue}{rgb}{0.50,0.85,0.85}
\definecolor{Cred}{rgb}{1,0,0}
\definecolor{fancy}{rgb}{0.10,0.85,0.10}
\newcommand\Cbox[2]{%
    \newbox\contentbox%
    \newbox\bkgdbox%
    \setbox\contentbox\hbox to \hsize{%
        \vtop{
            \kern\columnsep
            \hbox to \hsize{%
                \kern\columnsep%
                \advance\hsize by -2\columnsep%
                \setlength{\textwidth}{\hsize}%
                \vbox{
                    \parskip=\baselineskip
                    \parindent=0bp
                    #2
                }%
                \kern\columnsep%
            }%
            \kern\columnsep%
        }%
    }%
    \setbox\bkgdbox\vbox{
        \color{#1}
        \hrule width  \wd\contentbox %
               height \ht\contentbox %
               depth  \dp\contentbox
        \color{black}
    }%
    \wd\bkgdbox=0bp%
    \vbox{\hbox to \hsize{\box\bkgdbox\box\contentbox}}%
    \vskip\baselineskip%
}
\date{}
\begin{document}


\title[Compressible fluids with random forcing]{On the long time behavior of compressible fluid flows excited by random forcing}

\author{Dominic Breit}
\address[D. Breit]{Department of Mathematics, Heriot-Watt University, Riccarton Edinburgh EH14 4AS, UK}
\email{d.breit@hw.ac.uk}

\author{Eduard Feireisl}
\address[E.Feireisl]{Institute of Mathematics AS CR, \v{Z}itn\'a 25, 115 67 Praha 1, Czech Republic
\and Institute of Mathematics, TU Berlin, Strasse des 17.Juni, Berlin, Germany }
\email{feireisl@math.cas.cz}
\thanks{The research of E.F. leading to these results has received funding from
the Czech Sciences Foundation (GA\v CR), Grant Agreement
18--05974S. The Institute of Mathematics of the Academy of Sciences of
the Czech Republic is supported by RVO:67985840.}

\author{Martina Hofmanov\'a}
\address[M. Hofmanov\'a]{Fakult\"at f\"ur Mathematik, Universit\"at Bielefeld, D-33501 Bielefeld, Germany}
\email{hofmanova@math.uni-bielefeld.de}
\thanks{M.H. gratefully acknowledges the financial support by the German Science Foundation DFG via the Collaborative Research Center SFB1283.}

\begin{abstract}
We are concerned with the long time behavior of the stochastic Navier--Stokes system for  compressible fluids in dimension two and three.
In this setting, the part of the phase space occupied by the solution depends sensitively on the choice of the initial state. Our main results are threefold. (i) The kinetic energy of a solution is  universally and asymptotically bounded, independent of the initial datum. (ii) Time shifts of a solution with initially controlled energy are asymptotically compact and generate an entire solution defined for all $t\in R$. (iii) Every solution with initially controlled energy generates a stationary solution and even an ergodic stationary solution on the closure of the convex hull of its $\omega$--limit set.

\end{abstract}

\keywords{Navier--Stokes system, compressible fluid, stochastic forcing, stationary solutions}

\date{\today}

\maketitle

\tableofcontents
\section{Introduction}

The role of random forcing incorporated in originally deterministic models is in many cases to substitute for the 
effect of external driving mechanism represented by inhomogeneous boundary conditions. This gives rise to a mathematically 
simpler model that should retain, however, the essential behavior of the original system at least in the long run. In the framework of 
continuum fluid mechanics, equations with random forcing should shed some light on more complex problems related to turbulence.
In particular, the celebrated \emph{ergodic hypothesis} asserts:

\begin{quotation}
\emph{Time averages along trajectories of the flow converge, for large enough times, to an ensemble average given by a certain probability measure.}
\end{quotation} 

There is a common belief that such measure is in fact unique and completely characterizes the behavior of the fluid in the long run. This is  supported by the pieces of evidence in the case of \emph{incompressible} flows driven by the Navier--Stokes system with an additive  stochastic forcing. More precisely, for 
the incompressible planar flow driven by a physically relevant very degenerate stochastic forcing, unique ergodicity was established by
Hairer and Mattingly \cite{HaiMat}. The absence of a similar result in the physically relevant 3-D case is due to the existing gaps in the mathematical theory, in particular stability (uniqueness) of solutions with respect to the initial data. Nevertheless, it has been proved that  noise has a beneficial impact when it comes to long time behavior and ergodicity. Da Prato and Debussche \cite{DaPDeb} obtained a unique ergodicity for 3-D stochastic incompressible Navier--Stokes equations with non-degenerate  noise. The theory of Markov selections by Flandoli and Romito \cite{FlaRom} provides an alternative approach which also allowed to prove ergodicity for every Markov solution, see Romito \cite{MR2386571}.
The concept of (statistically) 
stationary solutions has been introduced both in the deterministic 
\cite{FMRT}, \cite{VisFur} and stochastic framework \cite{FlaGat}. 

The question of identifying  a \emph{unique} invariant measure for \emph{compressible} fluid motions excited by random forces is substantially different from the incompressible setting. As a matter of fact, this possibility is naturally limited/excluded 
as there are certain invariant quantities, for instance the total mass, that persist under the action of random forcing. 
Accordingly, the part of the phase space occupied by the trajectories necessarily depends sensitively on the choice of the initial state.
It is therefore desirable to show that  there exist
 invariant measures/stationary solutions generated by solutions to the initial value problem supported on a suitably defined $\omega-$limit set.

The main goal of the present paper is to investigate this question in the setting of the compressible Navier--Stokes system under stochastic perturbations.
Roughly speaking, the result can be shown by means of the standard Krylov--Bogoliubov theory as long as the solutions of the stochastic problem 
enjoy the following properties: 
\begin{itemize}
\item {\bf Global existence.} The problem admits a global--in--time solution -- a random process ranging a suitable phase space --
for any sample of initial data.   
 
\item {\bf Global boundedness.} The expected value of a suitable norm of global--in--time solutions is bounded independently of time.

\item {\bf Asymptotic compactness.} The law of any global--in--time solution is tight in a suitable space of trajectories. 

\end{itemize}

The above outlined points also summarize the strategy  of our proof. 
To be more precise, let $\vr = \vr(t,x)$ denote the mass density and $\vu = \vu(t,x)$ the bulk velocity  of a compressible viscous 
fluid occupying a bounded physical domain $Q \subset R^d$, $d=2,3$. 
In this paper, we are concerned with the \emph{compressible Navier--Stokes system} driven by a stochastic forcing:
\begin{equation} \label{E1}
\D \vr + \Div (\vr \vu) \dt = 0,
\end{equation}
\begin{equation} \label{E2}
\D (\vr \vu) + \Div (\vr \vu \otimes \vu) \dt + \Grad p(\vr) \dt = \Div \,\mathbb{S}(\Grad \vu) \dt + \vr \mathbf{g}(\vr,\vu) \D t+
 \vr\mathbb{F} (\vr,\vu)\D W,
\end{equation}
\begin{equation} \label{E3}
\mathbb{S}(\Grad \vu) = \mu \left( \Grad \vu + \Grad^t \vu - \frac{2}{d} \Div \vu \mathbb{I} \right)
+ \lambda \Div \vu \mathbb{I},\quad \mu > 0, \ \lambda \geq 0,
\end{equation}
where we include a deterministic  force $\mathbf{g}$ as well as a stochastic  force driven by a Wiener process $W$.
The problem is closed by imposing the no--slip boundary condition
\begin{equation} \label{E3a}
\vu|_{\partial Q} = 0.
\end{equation} 

We refer to \cite{BrFeHobook, BrFeHo2018C} for the existing mathematical theory of the problem \eqref{E1}--\eqref{E3a}. The long--time behavior of 
global--in--time solutions of the \emph{deterministic} problem was studied in \cite{FP9}, \cite{NOS1}, \cite{NOST}, and the monograph 
\cite{FeiPr}. In particular, the problem may admit several (a continuum of) stationary solutions already in the deterministic 
setting, see \cite{FP16}. In \cite{FanFeiHof}, it was proved that \emph{every} bounded solution to the deterministic system gives raise to a statistical stationary solution supported on its $\omega-$limit set.
The existence of \emph{stochastically} stationary solutions to \eqref{E1}--\eqref{E3a} with a given total mass was established in \cite{BrFeHo2017}.   

In the present paper, we go beyond the result of \cite{BrFeHo2017}. In particular, we focus on a  physically relevant \emph{hard sphere} pressure--density equation of state (see Section~\ref{s:2.1}) and show that solutions with initially controlled energy remain universally and asymptotically bounded in expectation, independently of the initial condition, see Theorem~\ref{thm:bound}. In other words, all such solutions ultimately  enter a bounded absorbing set. Furthermore, we establish an asymptotic compactness of solutions which in particular applies to sequences of time shifts of solutions and gives raise to an entire solution defined for all $t\in R$, see Theorem~\ref{thm:comp}. Finally, we deduce that  \emph{any}  solution with initially controlled energy generates a stationary solution,
  see Theorem~\ref{thm:main}. 

Unlike in the deterministic setting (cf.~\cite{FanFeiHof}), we define an $\omega$--limit set as a set of probability laws, not a set of trajectories. We then show that for \emph{any}  solution with initially controlled energy there is a stationary solution whose law belongs to the closure of the convex hull of its $\omega$--limit set, see Corollary~\ref{c:6.1}. Moreover, the method is constructive -- the solution is 
obtained by a direct application of Krylov--Bogoliubov's method applied on the $\omega$--limit set. Our approach is motivated 
by the pioneering work of It\^ o and Nisio \cite{ItNo} and the idea of Sell \cite{SEL}  replacing the natural phase space by the space of trajectories, see also Romito \cite{Romi}. Finally, we prove that there is an ergodic stationary solution on the closure of the convex hull of every such $\omega-$limit set, see Theorem~\ref{t:6.3}. In the setting of \cite{BrFeHo2017}, the existence of the $\omega-$limit set cannot even be proved and the procedure of the present paper cannot be repeated there.

As a byproduct of our strategy, we deduce two new results for the stochastic Navier--Stokes system \eqref{E1}--\eqref{E3a} that are
of independent interest themselves:

\begin{itemize}
\item {\bf Bounded moments of the total energy.} 
\[
\limsup_{t \to \infty} \expe{ \left( \intQ{ E(\vr, \vr \vu ) } \right)^m } \leq \mathcal{E}_\infty(m),\ m = 1,2,\dots 
\]
where 
\[
E(\vr, \vm) = \frac{1}{2} \frac{|\vm|^2}{\vr} + P(\vr),\ P'(\vr) \vr - P(\vr) = p(\vr), \ P(0) = 0 
\]
is the energy of the fluid. The constants $\mathcal{E}_\infty(m)$, $m=1,2,\dots$ are universal and independent of the initial condition.

\item {\bf Asymptotic compactness.} The law of the time shifts of a fixed solution 
\[
\mathcal{L} [\vr(\cdot + \tau_n), \vu(\cdot + \tau_n) ], \tau_n \to \infty
\] 
is tight in a suitable trajectory space.

\end{itemize}

In the context of stochastic incompressible Navier--Stokes system, the analogous results are basically immediate due to the dissipative nature of the problem and the good compactness properties of solutions. However, the compressible Navier--Stokes system is a mixed parabolic--hyperbolic system with a very delicate structure. Its incompressible counterpart instead is a semilinear parabolic system, to some extend rather similar to the heat equation.
As a consequence, the global boundedness as well as the asymptotic compactness both become substantially more difficult in the compressible case. The key difficulty is, on the one hand, the lack of energy dissipation stabilizing the system in the long run, on the other hand, the fact that the available energy and pressure estimates do not directly lead to strong convergence of the density necessary in order to pass to the limit.

 The first issue is overcome by performing a higher order energy estimate together with a new dissipation balance estimate and proving the existence of a bounded absorbing set in expectation, see Section~\ref{s:3.2}. 
The solution of the second issue leans on establishing the strong convergence of the approximate densities despite the fact that the initial conditions are lost in the limit process, see Section~\ref{s:4.4}. This is a delicate issue and requires careful analysis of the oscillation ``damping'' in the renormalized equation of continuity.

The paper is organized as follows. In Section \ref{sec:NS}, we recall the basic definitions, formulate the hypotheses and state the main results. The global--in--time estimates are established in Section \ref{i}. Section \ref{co} and Section~\ref{cs} form the heart of the paper. In Section \ref{co}, we show tightness of the time shifts of global trajectories. This property is subsequently used in Section 
\ref{cs}, where the Krylov--Bogoliubov method is applied to obtain a stationary solution. Section~\ref{s:erg} is devoted to the study of the ergodic structure of the set of stationary solutions. A sketch of the proof of existence of global--in--time solutions is given in the Appendix.

\section{Mathematical framework and main results}
\label{sec:NS}

\subsection{Pressure--density equation of state}\label{s:2.1}

The uniform bounds on the total energy require strong control of the fluid density. To this end, we consider the 
physically relevant \emph{hard sphere} pressure--density equation of state. Specifically, there is 
a limit density $\Ov{\vr} > 0$ such that 
\begin{equation} \label{p1}
\begin{split}
p &\in C^1[0,\Ov{\vr}) ,\ p(0)=0,\ p'(\vr) > 0 \ \mbox{for any}\ 0 < \vr < \Ov{\vr},\\
p'(\vr) &\geq a \vr^{\gamma - 1},\ a > 0,\ 
\lim_{\vr \to \Ov{\vr}-} (\Ov{\vr} - \vr)^\beta p(\vr) = \Ov{p} > 0,\\ 
&\mbox{for some exponents}\ \gamma > 1, \ \beta > 3. 
\end{split}
\end{equation}
The restriction on $\gamma$ and $\beta$ are technical and can be possibly relaxed. The essential feature of \eqref{p1} is the singularity 
of the pressure at $\Ov{\vr}$ yielding the (deterministic) bound $\vr \leq \Ov{\vr}$. This hypothesis is relevant for any \emph{real} fluid, see e.g. Carnahan and Starling \cite{CaSt}, Kolafa et al. \cite{KLM04AEHS}.

\subsection{Driving force}
\label{ss:force}

The deterministic driving force is given by  $\mathbf{g} \in C(\bar{Q}\times [0,\bar{\vr}]\times R^{d}; R^{d})$ satisfying
\begin{equation} \label{p12bis}
|\mathbf{g}(x,\vr,\vu)|\lesssim 1+|u|^{\alpha}\ \mbox{for some}\ \alpha\in [0,1).
\end{equation}

Let $T\geq0$ and let $\left( \Omega, \mathfrak{F}, (\mathfrak{F}_t)_{t \geq -T}, \mathbb{P} \right)
$ be a complete probability space
with a complete right-continuous filtration $(\mathfrak{F}_t)_{t \geq -T}$.
The  stochastic process $W$ is a cylindrical $(\mf_t)$-Wiener process on a separable Hilbert space $\mathfrak{U}$ normalized so that $W(0)=0$. It is formally given by the expansion $W(t)=\sum_{k=1}^\infty e_k\, W_k(t)$ where $(W_k)_{k\in\N}$ is a sequence of mutually independent real-valued  Wiener processes relative to $(\mf_t)_{t\geq -T}$ normalized so that $W_{k}(0)=0$, and $(e_k)_{k\in\N}$ is a complete orthonormal system in  $\mathfrak{U}$.
Accordingly, the diffusion coefficient ${\mathbb F}$ is defined as a superposition operator
$\mathbb F(\varrho, \vu):\mathfrak{U}\rightarrow L^1(\torN,R^{d})$,
$$
{\mathbb F}(\varrho,\vu)e_k=\bfF_k(\cdot,\varrho(\cdot),\vu(\cdot)).
$$
The coefficients $\bfF_{k} = \bfF_k (x, \varrho, \vu) :\torN \times[0,\Ov{\vr}] \times R^d \rightarrow R^d$ are $C^1$-functions such that it holds
\begin{equation} \label{p12}
| \vc{F}_k (x, \vr, \vu) | + | \nabla_{\vr,\vu} \vc{F}_k (x, \vr, \vu) | \leq f_k(1+|\vu|^{\alpha})\ \mbox{for some}\ \alpha \in [0,1),\quad 
\sum_{k \geq 1} f_k^2 < \infty,
\end{equation}
uniformly in $x\in\mt$.
Finally, we define the auxiliary space $\mathfrak{U}_0\supset\mathfrak{U}$ via
$$\mathfrak{U}_0=\bigg\{v=\sum_{k\geq 1}\alpha_k e_k;\;\sum_{k\geq 1}\frac{\alpha_k^2}{k^2}<\infty\bigg\},$$
endowed with the norm
$$\|v\|^2_{\mathfrak{U}_0}=\sum_{k\geq 1}\frac{\alpha_k^2}{k^2},\qquad v=\sum_{k\geq 1}\alpha_k e_k.$$
Note that the embedding $\mathfrak{U}\hookrightarrow\mathfrak{U}_0$ is Hilbert--Schmidt. Moreover, trajectories of $W$ are $\prst$-a.s. in $C_{\rm{loc}}([-T,\infty);\mathfrak{U}_0)$.

\subsection{Dissipative martingale solutions}

We give a definition of a solution to \eqref{E1}--\eqref{E3a}. For future use, it is convenient to consider a general 
time interval $[-T, \infty)$ with $T \geq 0$.

\begin{Definition}[Dissipative martingale solution]\label{def:sol}
The quantity  $$\big((\Omega,\mf,(\mf_t)_{t\geq -T},\prst),\varrho,\bfu,W)$$
is called a {\em dissipative martingale solution} to \eqref{E1}--\eqref{E3a} on the time interval $[-T, \infty)$, provided the following holds:
\begin{enumerate}[(a)]
\item $(\Omega,\mf,(\mf_t)_{t\geq -T},\prst)$ is a stochastic basis with a complete right-continuous filtration;
\item $W$ is a cylindrical $(\mf_t)$-Wiener process normalized so that $W(0) = 0$;
\item the density $0 \leq \vr \leq \Ov{\vr} $ belongs to the space
$
C_{\rm{weak,loc}}([-T,\infty); L^{q}(\mt))$
$ \mathbb{P}\mbox{-a.s.}$ for any $1 \leq q < \infty$ and is $(\mf_t)$-adapted;

\item the momentum $\vr\bfu$
belongs to the space
$
 C_{\rm{weak,loc}}([-T, \infty); L^2(\mt,R^{d}))
$
\pas\ and is $(\mf_t)$-adapted;
\item the velocity $\vu$ belongs to $
 L^2_{\rm{loc}}([-T , \infty); W^{1,2}_0(\mt,R^d))
$
\pas\ and is $(\mathfrak{F}_{t})$-adapted\footnote{Adaptedness of the velocity  is understood in the sense of random distributions, cf. \cite[Chapter~2.8]{BrFeHobook}.};
\item the total energy
\[
t \mapsto \int_{\mt} E \left( \vr, \vr \vu \right)(t, \cdot) \dx
\]
belongs to the space
$
L^\infty_{\rm{loc}}[-T,\infty)
$
\pas;
\item the equation of continuity
\begin{equation*}
\left[ \int_{\mt} \vr \psi \ \dx \right]_{t = \tau_1}^{t = \tau_2} -
\int_{\tau_1}^{\tau_2} \int_{\torN}\varrho\bfu\cdot\Grad\psi\dxt=0
\end{equation*}
holds for all $-T \leq \tau_1 < \tau_2$, $\psi\in C^1(\Ov{Q})$, $\prst$-a.s.;
\item for any $b\in C^1(\R)$ 
\begin{equation} \label{renorm}
\begin{split}
&\left[ \int_{\mt} b(\vr) \psi \ \dx \right]_{t = \tau_1}^{t = \tau_2}\\
 &- \int_{\tau_1}^{\tau_2} \int_{\torN} b(\varrho)\bfu\cdot\Grad\psi \,\dif x\,\dif t+
\int_{\tau_1}^{\tau_2} \int_{\torN} \big(b'(\varrho)\varrho-b(\varrho)\big)\diver\bfu\,\psi \,\dif x\,\dif t=0.
\end{split}
\end{equation}
for all $- T \leq \tau_1 < \tau_2$, $\psi \in C^1(\Ov{Q})$, $\prst$-a.s.;
\item the momentum equation
\begin{align}
\nonumber
 \left[ \intTN{ \vr \vu \cdot \bfphi } \right]_{t = \tau_1}^{t= \tau_2}
 &-\int_{\tau_1}^{\tau_2} \intTN{ \Big[ \vr \vu \otimes \vu : \Grad \bfphi  +  p(\vr) \Div \bfphi  \Big]  }
\dt\\
&+\int_{\tau_1}^{\tau_2} \  \intTN{\mathbb{S}(\Grad \vu) : \Grad \bfphi  } \dt\nonumber\\
&\hspace{-1.5cm}= 
\int_{\tau_1}^{\tau_2} \intQ{ \vr \vc{g} (\vr, \vr \vu)  \cdot \bfphi} \dt  + \sum_{k=1}^\infty\int_{\tau_1}^{\tau_2}  \left( \intTN{  \vr{\vc{F}_k} (\vr, \vr\vu) \cdot \bfphi } \right) \, \D  W_k
\label{N2}
\end{align}
holds for all $- T \leq \tau_1 < \tau_2$,  $\bfvarphi\in C^1_c(\torN;R^d)$, $\prst$-a.s.;
\item
the energy inequality
\begin{align} \nonumber
- &\int_{-T}^\infty \partial_t \phi \intQ{ E(\vr, \vr \vu) } \dt 
+ \int_{-T}^\infty \phi 
\intQ{ \mathbb{S}(\Grad \vu) : \Grad \vu }  \dt \nonumber
\\
& \leq \int_{-T}^\infty \phi \intQ{\vr \vc{g} (\vr, \vr \vu) \cdot \vu     } \dt + \frac{1}{2} 
\int_{-T}^\infty \phi 
\sum_{k=1}^\infty \intQ{ \vr |\vc{F}_k(\vr,\vr\bfu)|^2 }  \dt \nonumber \\
&\qquad+ \sum_{k = 1}^\infty\int_{-T}^\infty \phi  
\left( \intQ{ \vr\vc{F}_k(\vr,\vr\bfu) \cdot \vu }\right) {\rm d}W_k,
\label{N3}
\end{align}
holds for all $\phi\in C^1_c((-T, \infty))$, $\phi \geq 0$, $\prst$-a.s.
\end{enumerate}
\end{Definition}

We
recall that the total energy 
\[
E (\vr, \vm) = \left\{ \begin{array}{l} \frac{1}{2} \frac{|\vm|^2}{\vr} + P(\vr) \ \mbox{for}\ \vr > 0, 
\\ 0 \ \mbox{if}\ \vr = 0, \ \vm = 0  \\ \infty  \ \mbox{otherwise} \end{array} \right.
\]
with the pressure potential $P$ defined through $P'(\vr) \vr - P(\vr) = p(\vr), \ P(0) = 0$,
is a convex lower semi-continuous function on $[0,\overline\vr)\times R^{d}$; whence  
\[
t \mapsto \intQ{ E(\vr, \vr \vu) (t) } 
\]
is a lower semi-continuous function defined for any $t \in [-T , \infty)$ $\prst$-a.s. for any dissipative martingale solution $(\vr, \vu)$. 
In addition, it follows from the energy inequality \eqref{N3} that the limit
\begin{equation}\label{conv1}
{ \mathcal{E} }(t) = \lim_{\delta \to 0+} \frac{1}{\delta} \int_{t}^{t + \delta} \intQ{ E(\vr, \vr \vu) (s, \cdot) } {\rm d}s 
\end{equation}
is well defined and $\mathfrak{F}_{t}-$measurable for any $t \geq -T$. Moreover, the 
function ${ \mathcal{E} }$ is c\` adl\` ag in $[0, \infty)$, thus 
progressively measurable, and 
\begin{equation} \label{conv}
\mathcal{E}(t) = \intQ{ E(\vr, \vr \vu) (t, \cdot) } \ \mbox{a.a. in}\ (-T, \infty) \ \prst-\mbox{a.s.}
\end{equation}
In the remaining part of the paper, we use $\mathcal{E}$ to denote the total energy keeping in mind \eqref{conv}.

\subsection{Stationary solutions}
The concept of stationary solution is motivated by the approach of It\^ o and Nisio \cite{ItNo}. We first introduce the space of trajectories. 
Despite the fact that the (weakly) time continuous quantities are the conservative variables $\vr$, $\vm = \vr \vu$, it is more convenient to consider the \emph{standard variables} 
\[
\begin{split}
\vr &\in C_{\rm weak,loc}(R; L^q(Q)),\ 1 \leq q < \infty,\\ 
\vu &\in L^2_{\rm loc}(R; W^{1,2}_0 (Q; R^d) ), 
\end{split}
\]
together with the noise 
\[
 W \in C_{\rm loc,0}(R; \mathfrak{U}_0 ), 
\] 
where $ C_{\rm loc,0}$ denotes the space of continuous functions vanishing at $0$.
We define the   trajectory space 
\[
\mathcal{T} = C_{\rm weak, loc}(R; L^q(Q)) 
\times \left( L^2_{\rm loc}(R; W^{1,2}_0 (Q; R^d) ), w \right) \times C_{\rm loc,0}(R; \mathfrak{U}_0 ).
\]

\begin{Definition}\label{def:ent}
We say that $\big((\Omega,\mf,(\mf_t)_{t\in R},\prst),\varrho,\bfu,W)$ is an entire solution of the problem \eqref{E1}--\eqref{E3a} if  $(\vr,\vu, W)\in\mathcal{T}$ \pas\ and if $\big((\Omega,\mf,(\mf_t)_{t\geq-T},\prst),\varrho,\bfu,W)$  is  a dissipative martingale solution on $[-T,\infty)$ for all $T\geq 0$ in the sense of Definition~\ref{def:sol}.
\end{Definition}

Next, we introduce the time shift operator, 
\[
\mathcal{S}_\tau [\vr, \vu,  W] (t) = 
[\vr (t + \tau), \vu(t + \tau),  W (t + \tau)-W(\tau)], \ t \in R, \ \tau \in R.
\]
It is easy to check that the time shift  
\[
\mathcal{S}_\tau [\vr, \vu,  W]  ,\ \tau \geq 0,
\]
of any dissipative martingale solution 
$$\big((\Omega,\mf,(\mf_t)_{t\geq-T},\prst),\varrho,\bfu,W)$$
on $[-T,\infty)$ gives rise to another dissipative  martingale solution 
$$\big((\Omega,\mf,(\mf_{t + \tau})_{t\geq-T-\tau},\prst),\varrho (\cdot + \tau) ,\bfu (\cdot + \tau),W (\cdot + \tau) - 
W(\tau) \big)$$ 
on $[-T-\tau,\infty)$
of the same problem.

\begin{Definition}[Stationary solution] \label{def:stationary}

We say that an entire solution
$$\big((\Omega,\mf,(\mf_t)_{t\in R},\prst),\varrho,\bfu,W)$$ of the problem \eqref{E1}--\eqref{E3a} is \emph{stationary} if its law 
\[
\mathcal{L}_{\mathcal{T}}[\vr, \vu,  W] 
\] 
is shift invariant in the trajectory space $\mathcal{T} $,
meaning 
\[
\mathcal{L}_{\mathcal{T}} \left[ \mathcal{S}_\tau [\vr, \vu,  W] \right] = \mathcal{L}_{\mathcal{T}}[\vr, \vu,  W]
\ \mbox{for any}\ \tau \in R.
\]

\end{Definition}

\begin{Remark}\label{r:2.4}
It is convenient to regard solutions on $[-T,\infty)$ as trajectories in $\mathcal{T}$. To this end, we tacitly extend $[\vr,\vu,W](t)=[\vr,\vu, W](-T)$ for all $t\leq -T$.
\end{Remark}

\subsection{Main results}
\label{s:2.5}

Having collected all the necessary material we are ready to state our main results.

\begin{Theorem}[Stationary solutions generated by bounded trajectories] \label{thm:main}

Let $Q \subset R^d$, $d = 2,3$ be a bounded Lipschitz domain. Suppose that the pressure $p$, the deterministic 
driving force $\vc{g}$, and the noise diffusion coefficients $\mathbb{F}$ satisfy the hypotheses \eqref{p1}, 
\eqref{p12bis}, and \eqref{p12}, respectively. Let 
$$\big((\Omega,\mf,(\mf_t)_{t\geq0},\prst),\varrho,\bfu,W)$$ be a dissipative martingale solution of the problem 
\eqref{E1}--\eqref{E3a} specified in Definition~\ref{def:sol} such that 
\begin{equation} \label{hypo1}
\begin{aligned}
&\expe{ \mathcal{E}(0)^4 } < \infty,\ \prst \left[ \Ov{\vr} - \frac{1}{|Q|}\intQ{ \vr(0, \cdot)} > \delta  \right] = 1,\\
&\qquad\ \prst \left[ \frac{\vr\vu(0)}{\vr(0)}\in W^{1,2}_{0}(Q,R^{d})\right] = 1,
\end{aligned}
\end{equation}
for some deterministic constant $\delta > 0$.

Then there is a sequence $T_n \to \infty$ and a stationary solution 
$$\big((\tilde{\Omega},\tilde{\mf},(\tilde{\mf}_t)_{t\in R},\tilde{\prst}),\tvr,\tilde{\vu}, 
\tilde{W})$$ 
such that 
\[
\frac{1}{T_n} \int_0^{T_n} \mathcal{L}_{\mathcal{T}}\left[ \mathcal{S}_t \left[ \vr, {\vu},  {W} \right] \right] \dt \to 
\mathcal{L}_{\mathcal{T}} \left[ \tvr, \tilde{\vu}, \tilde{W} \right] \ \mbox{narrowly as}\ 
n \to \infty.
\]
\end{Theorem}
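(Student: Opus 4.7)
The proof will follow the classical Krylov--Bogoliubov scheme, implemented on the trajectory space $\mathcal{T}$ rather than on a phase space, in the spirit of It\^o--Nisio and Sell. Concretely, define the family of time-averaged laws
\begin{equation*}
\mu_n = \frac{1}{T_n}\int_0^{T_n} \mathcal{L}_{\mathcal{T}}\bigl[\mathcal{S}_t[\vr,\vu,W]\bigr]\,\dt,
\end{equation*}
for some sequence $T_n\to\infty$ to be chosen. The plan is to show that $\{\mu_n\}$ is tight on $\mathcal{T}$, extract a narrowly convergent subsequence $\mu_n\to\mu$, realize $\mu$ as the law of an entire dissipative martingale solution, and verify that it is stationary.

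First I would invoke Theorem~\ref{thm:bound} (bounded absorbing set in expectation) together with the hypothesis $\expe{\mathcal{E}(0)^4}<\infty$ and $\Ov\vr-\frac{1}{|Q|}\intQ{\vr(0,\cdot)}>\delta$ to conclude that for every $\tau\geq 0$,
\begin{equation*}
\sup_{\tau\geq 0}\expe{\mathcal{E}(\tau)^m}\leq \mathcal{E}_\infty(m),\qquad m=1,2,3,4,
\end{equation*}
and that the total mass is preserved by the continuity equation, so the mean density remains bounded away from $\Ov\vr$ uniformly in $\tau$. Combined with Theorem~\ref{thm:comp}, this yields tightness of the individual laws $\mathcal{L}_{\mathcal{T}}[\mathcal{S}_\tau[\vr,\vu,W]]$ on $\mathcal{T}$; averaging preserves tightness, so $\{\mu_n\}$ is tight. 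Prokhorov's theorem then furnishes a narrowly convergent subsequence $\mu_{n}\to\mu$ on the non-metric but sub-Polish space $\mathcal{T}$ (Jakubowski's version of Skorokhod's theorem applies).

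Next I would construct a candidate stationary solution. Applying the Jakubowski--Skorokhod representation on a new probability space $(\tilde\Omega,\tilde{\mf},\tilde\prst)$, one obtains a random triple $(\tvr,\tilde\vu,\tilde W)$ whose law is $\mu$. To verify that this is an entire dissipative martingale solution on every $[-T,\infty)$, I would fix $T>0$ and, because the shifts of the original solution are themselves dissipative martingale solutions on $[-T,\infty)$ (for $\tau\geq T$), pass to the limit in the continuity equation, the renormalized continuity equation, the momentum equation, and the energy inequality, exactly as in the construction of solutions recalled in Section~\ref{sec:NS} and the Appendix. Shift invariance of $\mu$ is obtained in the usual Krylov--Bogoliubov fashion: for any bounded continuous test function $\Phi$ on $\mathcal{T}$ and any $s\in R$,
\begin{equation*}
\bigl|\langle\mu_n,\Phi\circ\mathcal{S}_s\rangle-\langle\mu_n,\Phi\rangle\bigr|\leq \frac{2|s|\,\|\Phi\|_\infty}{T_n}\to 0,
\end{equation*}
so $\mu=\mu\circ\mathcal{S}_s^{-1}$ for every $s$, which is precisely Definition~\ref{def:stationary}.

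The main obstacle is the limit passage in the nonlinear terms, most notably in the pressure $p(\vr)$ and the convective term $\vr\vu\otimes\vu$, because the time-shifted sequence $\vr(\cdot+\tau_n)$ loses its initial data in the limit $\tau_n\to\infty$. One cannot apply the usual Lions--Feireisl argument for strong convergence of the density that relies on a fixed initial condition. The remedy, as already advertised in Section~\ref{s:4.4} of the introduction, is to propagate the oscillation defect measure of the density backward in time using the renormalized continuity equation on an arbitrary window $[-T,T']$, exploiting the dissipation produced by the effective viscous flux identity, which gives decay of oscillations independent of the (lost) initial data. Once $\tvr_n\to\tvr$ strongly in $L^1_{\rm loc}(R\times Q)$ is established, pressure and convective terms converge and one recovers the full set of equations and the energy inequality under the weak lower semicontinuity encoded in \eqref{conv}. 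A final routine step identifies the narrow limit of $\mu_{n_k}$ as $\mathcal{L}_{\mathcal{T}}[\tvr,\tilde\vu,\tilde W]$, completing the proof.
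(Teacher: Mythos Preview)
Your overall plan (tightness of time averages, Krylov--Bogoliubov shift invariance, identification of the limit as an entire solution) matches the paper's, and your discussion of the density oscillation argument is on target. However, there is a genuine gap in the step where you ``pass to the limit in the equations from the shifts.''

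The issue is that the limit measure $\mu$ is obtained as a narrow limit of \emph{time averages} $\mu_n=\frac{1}{T_n}\int_0^{T_n}\mathcal{L}_{\mathcal{T}}[\mathcal{S}_t[\vr,\vu,W]]\,\dt$, not of individual shifted laws. When you invoke Jakubowski--Skorokhod on $\mu_n\to\mu$, the approximating processes $[\tvr_n,\tilde\vu_n,\tilde W_n]$ have law $\mu_n$, which is a mixture of shifted laws and not the law of any single shifted trajectory. So the sentence ``the shifts are solutions, hence pass to the limit'' does not apply: you first need to know that a process with law $\mu_n$ is itself a dissipative martingale solution. This is the content of Proposition~\ref{p:5.3} in the paper, and it is not automatic. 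One must show that each item of Definition~\ref{def:sol} is encoded by a condition of the form $\expe{F(\vr,\vu,W)}=0$ (or $\leq 0$) for a Borel functional $F$ on $\mathcal{T}$, so that the condition is affine in the law and hence preserved under the convex combination \eqref{eq:convex}. Two points require care: the stochastic integral in the momentum equation and the energy inequality is only Borel-measurable (not continuous) in the trajectory, and one must verify that $\tilde W$ is a cylindrical Wiener process with respect to the \emph{joint} canonical filtration of $[\tvr,\tilde\vu,\tilde W]$ (non-anticipativity), which also passes through the averaging.

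Even after this, a second ingredient is missing from your sketch. Knowing that processes with law $\mu_n$ solve the system on $[-T,\infty)$ for each fixed $T$ does not yet allow a direct limit $n\to\infty$ in the equations, because the relevant functionals $F$ are not continuous on $\mathcal{T}$, so narrow convergence alone is insufficient. The paper resolves this with a diagonal argument: for $\tau_m\to\infty$, it applies Proposition~\ref{p:5.3} to the shifted solution $\mathcal{S}_{\tau_m}[\vr,\vu,W]$ on $[-\tau_m,\infty)$ to obtain measures $\nu_{\tau_m,S_{n(m)}-\tau_m}$ that are laws of solutions on $[-\tau_m,\infty)$ and converge narrowly to $\mu$ (by Lemma~\ref{l:5.2}). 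Only then does Jakubowski--Skorokhod produce an a.s.\ convergent sequence of genuine solutions on growing intervals, to which Theorem~\ref{thm:comp} (and the density oscillation argument you describe) applies to conclude that $\mu$ is the law of an entire solution.
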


The proof of Theorem \ref{thm:main} leans on two auxiliary results that are of independent interest. 

\begin{Theorem}[Ultimate boundedness] \label{thm:bound}

Under the hypotheses of Theorem \ref{thm:main}, there exists a universal constant $\mathcal{E}_\infty (m)$ such that 
\[
\limsup_{t \to \infty} \expe{ \left( \intQ{ E(\vr, \vr \vu)(t, \cdot) } \right)^m } \leq \mathcal{E}_\infty (m)
\]
for any dissipative martingale solution  
$$\big((\Omega,\mf,(\mf_t)_{t\geq0},\prst),\varrho,\bfu,W)$$ of the problem 
\eqref{E1}--\eqref{E3a} with the initial data satisfying
\[
\expe{  \mathcal{E}(0)^m } < \infty, \ \prst \left[ \Ov{\vr} - \frac{1}{|Q|}\intQ{ \vr(0, \cdot)} > \delta  \right] = 1
\]
for some $m \geq 1$, $\delta > 0$. More precisely, there exist universal constants $c_{m,1},c_{m,2},D_{m}>0$ such that
\begin{equation}\label{eq:61}
\expe{ \mathcal{E}(t)^m } \leq \exp( - D_m t) \left( \expe{ \mathcal{E}(0)^m} + c_{m,{1}}\right) + c_{m,2}
\end{equation}
for all $t>0$. 

\end{Theorem}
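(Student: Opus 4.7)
My goal is to deduce the estimate \eqref{eq:61} from a differential (or discrete) inequality of the form
\begin{equation*}
\frac{d}{dt}\,\mathbb{E}\bigl[\mathcal{E}(t)^m\bigr] \leq - D_m\,\mathbb{E}\bigl[\mathcal{E}(t)^m\bigr] + C_m, \qquad t > 0,
\end{equation*}
whose linear coercive term must dissipate both the kinetic and the potential part of $\mathcal{E}$. The conclusion then follows by Gr\"onwall, yielding $\limsup_{t\to\infty}\mathbb{E}[\mathcal{E}^m] \leq C_m/D_m =: \mathcal{E}_\infty(m)$, with constants independent of the initial datum.

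\emph{Step 1: It\^o for $\mathcal{E}^m$ and kinetic dissipation.} First I would apply (a regularized form of) It\^o's formula to $\Phi(\mathcal{E})=\mathcal{E}^m$ starting from the energy inequality \eqref{N3}. After taking expectation the martingale part drops and one obtains
\begin{equation*}
\tfrac{d}{dt}\mathbb{E}[\mathcal{E}^m] + m\,\mathbb{E}\Bigl[\mathcal{E}^{m-1}\!\int_Q \mathbb{S}(\nabla\vu):\nabla\vu\,dx\Bigr] \leq m\,\mathbb{E}\Bigl[\mathcal{E}^{m-1}\!\int_Q \bigl(\vr\,\vc{g}\cdot\vu + \tfrac{1}{2}\vr\textstyle\sum_k|\bfF_k|^2\bigr)dx\Bigr] + (Q_m),
\end{equation*}
where $(Q_m)$ is the It\^o correction coming from $\Phi''$, controlled via Cauchy--Schwarz by the quadratic variation of the noise weighted by $\mathcal{E}^{m-2}$. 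Korn together with the no-slip condition \eqref{E3a} gives $\int_Q \mathbb{S}(\nabla\vu):\nabla\vu\,dx \gtrsim \|\vu\|_{W^{1,2}_0}^2$, and the hard-sphere bound $\vr\leq\Ov{\vr}$ turns this into a lower bound by the kinetic energy $\tfrac{1}{2}\int_Q\vr|\vu|^2\,dx$. The sub-linear growth $\alpha<1$ in \eqref{p12bis}, \eqref{p12} together with $\sum_k f_k^2<\infty$ allows the drift, the diffusion contribution and the It\^o correction to be absorbed into the viscous dissipation via Young's inequality, leaving only an $O(\mathcal{E}^{m-1})$ remainder.

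\emph{Step 2: Potential dissipation via Bogovskii.} To generate dissipation for the potential component $\int_Q P(\vr)\,dx$, I would use the Bogovskii operator: test the momentum equation \eqref{N2} with $\bfphi(t)=\mathcal{B}[\vr(t)-\overline{\vr}(t)]$, where $\overline{\vr}(t)=|Q|^{-1}\int_Q\vr(t,\cdot)\,dx$. Since \eqref{E1} preserves the spatial mean of $\vr$, the hypothesis $\overline{\vr}(0)<\Ov{\vr}-\delta$ propagates almost surely to $\overline{\vr}(t)<\Ov{\vr}-\delta$ for all $t\geq 0$. The Bogovskii identity then yields, modulo boundary-in-time terms and stochastic corrections controllable by $\mathcal{E}$, a coercive contribution
\begin{equation*}
\int_\tau^{\tau+1}\!\int_Q p(\vr)\bigl(\vr-\overline{\vr}\bigr)\,dx\,ds,
\end{equation*}
and the hard-sphere structure \eqref{p1} combined with $\overline{\vr}\leq\Ov{\vr}-\delta$ implies that this quantity dominates $\int_\tau^{\tau+1}\!\int_Q P(\vr)\,dx\,ds$ (because $P$ grows no faster than $p(\vr)(\vr-\overline{\vr})$ as $\vr\uparrow\Ov{\vr}$). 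Pairing this with Step 1, integrated over a unit time window, produces the required estimate of the form $\mathbb{E}[\mathcal{E}(\tau+1)^m]-\mathbb{E}[\mathcal{E}(\tau)^m] \leq -D_m\!\int_\tau^{\tau+1}\mathbb{E}[\mathcal{E}^m]\,ds + C_m$, which yields \eqref{eq:61} by a discrete Gr\"onwall argument.

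\emph{Main obstacle.} The crux is the dissipation balance of Step 2. In contrast with the incompressible case, viscosity alone does not dissipate potential energy, and the Bogovskii identity provides only a time-averaged pressure bound. Lifting it to an $m$-th moment estimate requires a careful bookkeeping of the $\mathcal{E}^{m-1}$-weighted It\^o correction $(Q_m)$ from Step 1 together with the stochastic integral $\sum_k\int_Q \vr\bfF_k\cdot\bfphi\,dx\,dW_k$ generated by the Bogovskii test, both of which must be absorbed into the available dissipation using $\alpha<1$. A secondary difficulty is the rigorous justification that $\bfphi=\mathcal{B}[\vr-\overline\vr]$ is an admissible test function at the level of dissipative martingale solutions, which is handled by performing the computation at a Galerkin or approximate level where \eqref{N2} holds as a strong identity, and then passing to the limit.
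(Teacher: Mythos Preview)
Your ingredients are correct (It\^o for higher moments, Korn, Bogovskii, the mass constraint $\overline{\vr}<\Ov{\vr}-\delta$), but the way you assemble them in Steps~1 and~2 has a genuine gap for $m\geq 2$. Applying It\^o to $\mathcal{E}^m$ directly produces the dissipative term $m\,\mathbb{E}\bigl[\mathcal{E}^{m-1}\int_Q\mathbb{S}(\nabla\vu):\nabla\vu\bigr]$, which via Korn dominates $\mathcal{E}^{m-1}$ times the \emph{kinetic} energy only. To close, you need $\mathcal{E}^{m-1}\int_Q P(\vr)$ on the dissipation side as well. Your Bogovskii estimate, however, is a pathwise identity with its own stochastic integral and time-boundary terms; it is \emph{not} weighted by $\mathcal{E}^{m-1}$, and you cannot simply multiply it by the random, time-dependent factor $\mathcal{E}(t)^{m-1}$ after the fact (the boundary term $\bigl[\int_Q\vr\vu\cdot\mathcal{B}[\vr-\overline\vr]\bigr]_\tau^{\tau+1}$ would have to be paired with $\mathcal{E}^{m-1}$ at an unspecified time, and the stochastic integral is no longer a martingale under such a weight). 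Your ``pairing over a unit time window'' does not resolve this mismatch.

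The paper's remedy is to combine the two mechanisms \emph{before} taking powers. One multiplies the Bogovskii identity (with $\beta(\vr)=\vr$) by a small $\varepsilon>0$ and adds it to the energy inequality \eqref{N3}. This yields a single dissipation balance for the modified functional
\[
\mathcal{D}(t)=\mathcal{E}(t)-\varepsilon\int_Q \vr\vu\cdot\mathcal{B}\!\left[\vr-\tfrac{M}{|Q|}\right]\!\dx,
\]
whose dissipative part contains both $\int_Q\mathbb{S}(\nabla\vu):\nabla\vu$ and $\varepsilon\delta\int_Q p(\vr)$; crucially, $\mathcal{D}$ itself is controlled by this combined dissipation (plus a constant). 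One then splits $\mathcal{D}=\mathcal{D}_M+\mathcal{D}_C$ into a non-increasing and a continuous semimartingale part, regularizes $\mathcal{D}_M$, and applies It\^o to $\Phi(\mathcal{D})=|\mathcal{D}|^m$ once. After expectation this gives directly
\[
\bigl[\mathbb{E}|\mathcal{D}|^m\bigr]_{\tau_1}^{\tau_2}+D_m\int_{\tau_1}^{\tau_2}\mathbb{E}|\mathcal{D}|^m\,\dt\leq C_m(\tau_2-\tau_1),
\]
and the BV-Gronwall Lemma~\ref{uL1} yields exponential decay for $\mathbb{E}|\mathcal{D}|^m$. The bound $|\mathcal{D}-\mathcal{E}|\lesssim\sqrt{\mathcal{E}}$ then transfers this to $\mathbb{E}[\mathcal{E}^m]$. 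In short: include the Bogovskii correction in the functional first, then take the $m$-th power---not the other way around. (Incidentally, the Bogovskii test is justified at the level of dissipative martingale solutions via a generalized It\^o formula, not by going back to Galerkin.)
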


In the following result, we employ the notation $\mathcal{E}_{n}$ for the c\`adl\`ag version of the energy associated to $(\vr_{n},\vr_{n}\vu_{n})$ and defined through \eqref{conv1}.

\begin{Theorem}[Asymptotic compactness] \label{thm:comp}
Under the hypotheses of Theorem \ref{thm:main}, let 
\[
\Big((\Omega_n,\mf_n,(\mf_n )_{t\geq - T_{n}},\prst_n),\varrho_n,\bfu_n ,W_n 
\Big),\ n=1,2,\dots 
\]
be a sequence of dissipative martingale solutions of the problem \eqref{E1}--\eqref{E3a} 
on the time interval $[-T_{n}, \infty)$, $T_{n} \to \infty$, 
such that 
\begin{equation}\label{eq:ass}
\begin{aligned}
\sup_{n \geq 1}
&\expe{  \mathcal{E}_{n}(-T_{n})^{4}  } < \infty,
\ \prst \left[ \Ov{\vr} - \frac{1}{|Q|}\intQ{ \vr_n(-T_{n})} > \delta  \right] = 1,\\
&\prst \left[ \frac{\vr_{n}\vu_{n}(-T_{n})}{\vr_{n}(-T_{n})}\in W^{1,2}_{0}(Q,R^{d})\right] = 1, \ n = 1,2,\dots
\end{aligned}
\end{equation}
for some deterministic constant $\delta > 0$.

Then there exists a subsequence (not relabeled) such that
\[
\mathcal{L}_{\mathcal{T}} \left[ \vr_n, \vu_n,  W_n  \right] \to 
\mathcal{L}_{\mathcal{T}} \left[  \vr, \vu,  W  \right]\ \mbox{narrowly as}\ n \to \infty, 
\]
where $(\vr, \vu, W)$ is an entire  solution of the same problem defined on a certain probability space 
\[
(\Omega, \mathfrak{F}, \prst)\ \mbox{with a filtration}\ (\mathfrak{F}_t)_{t\in R}.
\]
\end{Theorem}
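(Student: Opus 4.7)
The plan is to follow the classical compactness--Skorokhod--passage-to-the-limit strategy, where the main nontrivial ingredient is a version of the ``effective viscous flux'' argument that is robust against losing the initial data at time $-T_n\to-\infty$.

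\emph{Step 1: Uniform estimates on every finite window.} Fix $T>0$ and consider $n$ large enough so that $T_n\geq T$. The assumption \eqref{eq:ass} together with the ultimate boundedness in Theorem~\ref{thm:bound} applied on the time interval $[-T_n,\infty)$ yields
\[
\sup_n\ \sup_{t\in[-T,T]}\ \expe{\mathcal{E}_n(t)^m}\leq C(m,T),\qquad m\leq 4.
\]
Testing the energy inequality \eqref{N3} against a cutoff supported in $[-T,T]$, controlling the martingale term via Burkholder--Davis--Gundy, and using the structural assumption \eqref{p1} (which gives the deterministic bound $\vr_n\leq\bar\vr$), I obtain uniform bounds for $\vu_n$ in $L^2_\omega L^2_t W^{1,2}_{0,x}([-T,T])$ and for $p(\vr_n)$ in $L^1_\omega L^1_{t,x}$.

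\emph{Step 2: Tightness in $\mathcal{T}$.} Feeding these bounds into \eqref{renorm} and \eqref{N2} produces equicontinuity in expectation, in a sufficiently negative Sobolev norm, for $t\mapsto\intQ{\vr_n(t)\psi}$ and $t\mapsto\intQ{\vr_n\vu_n(t)\cdot\bfphi}$; combined with the $L^\infty$ bound on $\vr_n$ and standard Arzel\`a--Ascoli in weak topologies (as in \cite{BrFeHobook}), this gives tightness of $\mathcal{L}[\vr_n,\vu_n]$ in the $[-T,T]$-restriction of $\mathcal{T}$. Tightness of the cylindrical Wiener processes $W_n$ in $C_{\rm loc,0}(R;\mathfrak{U}_0)$ is automatic. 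A diagonal extraction along $T=1,2,\ldots$ produces a subsequence whose laws converge narrowly in the full trajectory space $\mathcal{T}$ on $R$.

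\emph{Step 3: Skorokhod representation and passage to the limit (non-pressure terms).} Since $\mathcal{T}$ is a quasi-Polish space, Jakubowski's version of the Skorokhod theorem gives a.s.\ converging copies $(\tilde\vr_n,\tilde\vu_n,\tilde W_n)\to(\tilde\vr,\tilde\vu,\tilde W)$ on a new probability space, which I endow with the canonical right-continuous completed filtration. All linear terms in the continuity equation \eqref{renorm}, the momentum equation \eqref{N2}, and the energy inequality \eqref{N3} pass to the limit either by linearity or by lower semicontinuity (for $E$ and the dissipation), and the stochastic integral is identified via the standard procedure of \cite[Ch.~2]{BrFeHobook} using only a.s.\ convergence of $(\tilde\vu_n,\tilde W_n)$.

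\emph{Step 4: The main obstacle --- strong convergence of the density.} To close the limit I need $p(\tilde\vr_n)\to p(\tilde\vr)$; classical compressible theory deduces this from propagation of the oscillation defect forward from a deterministic initial time, but here that initial time drifts to $-\infty$. Instead, following the approach of Section~\ref{s:4.4}, I combine the effective viscous flux identity
\[
\overline{p(\vr)\Div\vu}-\overline{p(\vr)}\,\Div\tilde\vu=\Bigl(\lambda+\tfrac{2\mu}{d}\Bigr)^{-1}\bigl(\overline{p(\vr)\vr}-\overline{p(\vr)}\,\tilde\vr\bigr)
\]
with the renormalized continuity equation \eqref{renorm} for $b(\vr)=\vr\log\vr$. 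The monotonicity of $p$ and the bound $\vr_n\leq\bar\vr$ yield a Gronwall-type inequality for the oscillation defect on any finite window $[-T,\tau]$ with a dissipative constant that depends only on the structural data. Since we may take $T_n\gg T$, the defect at the (artificial) starting time $-T$ is controlled uniformly via Step 1, and letting $T\to\infty$ forces the defect to vanish on every compact interval. Hence $\tilde\vr_n\to\tilde\vr$ a.e., $p(\tilde\vr_n)\to p(\tilde\vr)$ in $L^1$, and the limit $(\tilde\vr,\tilde\vu,\tilde W)$ is an entire dissipative martingale solution in the sense of Definition~\ref{def:ent}. The remaining verification of the energy inequality for arbitrarily negative lower limits follows from Fatou and the $m=4$ moment bound of Theorem~\ref{thm:bound}, which comfortably dominates the martingale tails.
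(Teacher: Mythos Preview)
Your overall architecture matches the paper's proof in Section~\ref{co}: uniform energy bounds, Jakubowski--Skorokhod, and then a defect argument for the density that does not rely on compactness of initial data. Two points deserve correction.

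\medskip
\textbf{Missing pressure estimate.} In Step~1 you only claim $p(\vr_n)\in L^1_\omega L^1_{t,x}$. That is not enough: the hard--sphere pressure is singular at $\bar\vr$, so an $L^1$ bound gives neither weak compactness of $p(\vr_n)$ in the momentum equation nor the integrability needed for the effective viscous flux computation in Step~4. The paper closes this gap in Section~\ref{sec:pressureestimates} by testing the momentum equation with $\mathcal{B}\big[(\bar\vr-\vr_n)^{-\omega}-\text{avg}\big]$ for $0<\omega\leq\tfrac{\beta-3}{2}$, which yields $p(\vr_n)\in L^{(\beta+\omega)/\beta}_{t,x}$ in expectation (see \eqref{co4}). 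You need this step; otherwise Steps~3 and~4 cannot be executed.

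\medskip
\textbf{Nature of the defect inequality in Step~4.} Your displayed effective viscous flux identity has $\overline{p(\vr)\Div\vu}$ on the left; it should read $\overline{\vr\,\Div\vu}-\vr\,\Div\vu$, matching the renormalization $b(\vr)=\vr\log\vr$ (cf.\ \eqref{co13}). More importantly, monotonicity of $p$ together with $p'(\vr)\geq a\vr^{\gamma-1}$ and H\"older's inequality only give the \emph{superlinear} differential inequality
\[
\frac{\D}{\dt}D(t)+\theta\,D(t)^{(\gamma+1)/\alpha}\leq 0,\qquad (\gamma+1)/\alpha>1,
\]
not a linear Gronwall. Your conclusion is still correct: since $0\leq D\leq\bar D$ for every $t\in R$, integrating this inequality forward from $-T$ and letting $T\to\infty$ (equivalently, noting that a positive solution blows up backward in finite time) forces $D\equiv 0$. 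This is exactly the paper's argument in Section~\ref{s:4.4}, but your phrasing ``Gronwall-type inequality with a dissipative constant'' misdescribes the mechanism.
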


The energy momenta estimates claimed in Theorem \ref{thm:bound} are new in the context of the stochastic problem and depend essentially 
on the properties of the hard--sphere pressure equation of state in \eqref{p1}. The crucial point is to control the density (pressure) in terms 
of the dissipation term 
\[
\intQ{ \mathbb{S} (\Grad \vu): \Grad \vu }.
\] 
With these estimates at hand, the proof of Theorem \ref{thm:comp} follows the steps of the proof of existence. There is, however, 
a key  difference, namely, the initial data represented by the value of the time shifts at the time $-T_{n}$ are ``lost'' in the limit process. In particular, the crucial ingredient of the existence proof -- compactness of the initial data -- is no longer available. Instead, the deterministic argument on propagation of density oscillations proved originally in \cite{FP15} must be adapted to the stochastic framework. 

\section{Global in time estimates}
\label{i}

Our goal is to show the momentum estimates claimed in Theorem \ref{thm:bound}.

\subsection{A Gronwall-type estimate for BV-functions}
 
Let us start with a standard version of Gonwalls' lemma.

\begin{Lemma} \label{uL1}
Let $F \in BV_{\rm loc}(0, \infty)$ be such that 
\begin{equation} \label{u10}
F(\tau_2 + ) + D \int_{\tau_1}^{\tau_2} F(t) \ \dt \leq F(\tau_1 -) + C (\tau_2 - \tau_1) 
\end{equation}
for any $0 < \tau_1 \leq \tau_2$ with some $D>0$. 
Then 
\[
F(t\pm) \leq \exp (- Dt) \left( F(0+) - \frac{C}{D} \right) + \frac{C}{D}.
\] 
for all $t>0$
\end{Lemma}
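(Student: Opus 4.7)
The plan is to reduce \eqref{u10} to its homogeneous counterpart and then recognise the resulting bound as a measure-theoretic form of the differential inequality $G' + DG \le 0$, whose multiplication by the integrating factor $e^{Dt}$ yields monotonicity of $t\mapsto e^{Dt}G(t+)$.

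First I would set $G(t)=F(t)-C/D$; a direct substitution into \eqref{u10}, using $D\int_{\tau_1}^{\tau_2}(C/D)\,\mathrm dt=C(\tau_2-\tau_1)$, cancels the constant source term and gives
$$
G(\tau_2+)+D\int_{\tau_1}^{\tau_2}G(t)\,\mathrm dt\le G(\tau_1-),\qquad 0<\tau_1\le\tau_2.
$$
Since $G\in BV_{\rm loc}$, its distributional derivative $\mathrm dG$ is a signed Radon measure and $G(\tau_2+)-G(\tau_1-)=\mathrm dG([\tau_1,\tau_2])$. The display above is therefore the integrated form of the measure inequality $\mathrm dG+DG\,\mathrm dt\le 0$ on $(0,\infty)$. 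In particular, taking $\tau_1=\tau_2=\tau$ confirms that $G$ (and hence $F$) admits only downward jumps, which is what makes the one-sided limits on the two sides of \eqref{u10} consistent.

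Next, I would test this measure inequality against the positive smooth weight $e^{Dr}$ by means of the BV integration-by-parts formula
$$
e^{Dt}G(t+)-e^{Ds}G(s-)=\int_{[s,t]}e^{Dr}\,\mathrm dG(r)+D\int_s^t e^{Dr}G(r)\,\mathrm dr,\qquad 0<s\le t.
$$
Since $e^{Dr}\ge 0$, the measure bound gives $\int_{[s,t]}e^{Dr}\,\mathrm dG\le -D\int_s^t e^{Dr}G\,\mathrm dr$, so the right-hand side above is non-positive and $t\mapsto e^{Dt}G(t+)$ is non-increasing in the sense that $e^{Dt}G(t+)\le e^{Ds}G(s-)$. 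Passing to $s\to 0+$ yields $G(t+)\le e^{-Dt}G(0+)$, which, after undoing the shift $G=F-C/D$, is precisely the desired estimate for $F(t+)$. For $F(t-)$, I would approach $t$ from below through the dense set of continuity points $s_n\uparrow t$ of the BV function $F$; at such points $F(s_n)=F(s_n+)$ and the bound just proved, combined with continuity of the right-hand side in $s_n$, passes to the limit to give the same estimate for $F(t-)$.

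The main technical obstacle is the second step: one has to justify the BV integration-by-parts identity with the correct one-sided representatives $G(t+)$, $G(s-)$, and verify that the hypothesis \eqref{u10}, which a priori gives the measure inequality only when integrated against the characteristic function of a closed interval, extends by approximation to testing against the continuous positive weight $e^{Dr}$. Once this is established, the exponential monotonicity and the passage to the limit $s\to 0+$ are routine.
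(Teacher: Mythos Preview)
Your argument is correct and follows the same route as the paper: shift by $C/D$ to reduce to the homogeneous inequality, then show that $t\mapsto e^{Dt}G(t)$ is non-increasing and conclude. The paper's proof is extremely terse and simply asserts this monotonicity; your version supplies the justification via the measure inequality $\mathrm dG+DG\,\mathrm dt\le 0$ and the BV product rule with the weight $e^{Dr}$, which is exactly the missing detail.
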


\begin{proof}
we first note that \eqref{u10} is equivalent to
\begin{equation*} 
G(\tau_2+) + D \int_{\tau_1}^{\tau_2} G(t) \ \dt \leq G(\tau_1 -),\quad G(t)=F(t)-\frac{C}{D}.
\end{equation*}
It follows that the function 
\[
\exp (Dt)G(t)= \exp (Dt)\left( F(t) - \frac{C}{D} \right) 
\]
is non--increasing on $(0, \infty)$; whence 
\[
F(t\pm) \leq \exp (- Dt) \left( F(0+) - \frac{C}{D} \right) + \frac{C}{D},\ t > 0.
\] 

\end{proof}

\subsection{Higher energy moments}\label{s:3.2}

Let us introduce the so--called Bogovskii operator $\mathcal{B}$ enjoying the following properties: 
\begin{align}\label{def:bog}
\begin{aligned}
\mathcal{B} : L^q_0 (Q) &\equiv \left\{ f \in L^q(Q) \ \Big| \ \int_Q f \dx = 0 \right\}
\to W^{1,q}_0(Q, R^d),\ 1 < q < \infty,\\
\Div \mathcal{B}[f] &= f ,\\ 
\| \mathcal{B}[f] \|_{L^r(Q)} &\aleq \| \vc{g} \|_{L^r(Q; R^d)} \ \mbox{if}\ 
f = \Div \vc{g}, \ \vc{g} \cdot \vc{n}|_{\partial Q} = 0, \ 1 < r < \infty,
\end{aligned}
\end{align}
see Galdi \cite[Chapter 3]{GALN}, Gei{\ss}ert, Heck, and Hieber \cite{GEHEHI}. Here $\vc{n}$ denotes the outer unit normal on $\partial Q$.

In what follows, we neglect the deterministic forcing $\vr \vc{g} (\vr, \vr \vu)$ as its treatment does not present any additional 
difficulties.
Recalling our convention \eqref{conv} we deduce from the energy inequality \eqref{N3}
\begin{equation} \label{i3}
\begin{split}
&\Big[ \mathcal{E}(t) \Big]_{t = \tau_1}^{t = \tau_2} + \int_{\tau_1}^{\tau_2} \intQ{ \mathbb{S}(\Grad \vu) : \Grad \vu } \dt 
\\ &\quad\leq \frac{1}{2} \int_{\tau_1}^{\tau_2} \intQ{ \vr \sum_{k \geq 1} |\vc{F}_k (\vr,  \vu)|^2 } \dt
+ \sum_{k\geq 1}\int_{\tau_1}^{\tau_2} \left( \intQ{ \vr \vc{F}_{k}(\vr, \vu) \cdot \vu } \right) \D W_{k}
\end{split}
\end{equation}
for any $0 \leq \tau_1 < \tau_2$ $\prst$-a.s.

Next, we use the quantity 
\[
\mathcal{B} \left[ \beta(\vr) - \frac{1}{|Q|} \intQ{ \beta(\vr) } \right]  
\]
as a test function in the momentum balance \eqref{N2}. This is not completely obvious as our test function is a random 
variable, however, the function $\beta (\vr)$ satisfies the deterministic equation \eqref{renorm} and such a step can be 
rigorously justified 
by the arguments detailed in \cite[Section 4.4.2]{BrFeHobook} by the use of the generalized It\^{o} formula \cite[Theorem~A.4.1]{BrFeHobook}.  
\begin{equation} \label{i5}
\begin{split}
\int_{\tau_1}^{\tau_2} &\intQ{ p(\vr) \beta(\vr) } \dt =
\left[ \intQ{ \vr \vu \cdot \mathcal{B} \left[ \beta(\vr) - \frac{1}{|Q|} \intQ{ \beta(\vr) } \right] }  \right]_{t= \tau_1}^{t = \tau_2}
\\&+ \frac{1}{|Q|} \int_{\tau_1}^{\tau_2} \left( \intQ{ p(\vr) } \intQ{ \beta(\vr) } \right) \dt \\
&- \int_{\tau_1}^{\tau_2} \intQ{  \vr \vu \otimes \vu : \Grad \mathcal{B} \left[ \beta(\vr) - \frac{1}{|Q|} \intQ{ \beta(\vr) }  \right] } \dt\\
&+ \int_{\tau_1}^{\tau_2} \intQ{  \mathbb{S} ( \Grad \vu ) : \Grad \mathcal{B} \left[\beta(\vr) - \frac{1}{|Q|} \intQ{ \beta(\vr) }  \right] } \dt \\
& - \int_{\tau_1}^{\tau_2} \intQ{ \vr \vu \cdot \mathcal{B} \Big[ \Div (\beta(\vr) \vu) \Big] } \dt
\\
&+ \int_{\tau_1}^{\tau_2} \intQ{ \vr \vu \cdot \mathcal{B} \left[ \left( \beta(\vr) - \beta'(\vr) \vr \right) \Div \vu 
- A \right] } \dt
\\
&- \sum_{k\geq 1}\int_{\tau_1}^{\tau_2} \left( \intQ{  \vr \vc{F}_{k}(\vr, \vu) \cdot \mathcal{B} \left[ \beta(\vr) - \frac{1}{|Q|} \intQ{ \beta(\vr) }  \right] } \right) \D W_{k}
\end{split}
\end{equation}
for any $0 \leq \tau_1 < \tau_2$ $\prst$-a.s., where $A=\frac{1}{|Q|}\intQ{ \left( \beta(\vr) - \beta'(\vr) \vr \right) \Div \vu } $.

We combine the energy inequality \eqref{i3} with the pressure estimates \eqref{i5} to obtain the total dissipation balance that is a crucial tool in the subsequent analysis. First, consider $\beta (\vr) = \vr$ in \eqref{i5} to gain $\prst$-a.s.
\begin{align} \label{i6}
\begin{aligned}
\int_{\tau_1}^{\tau_2}
&\int_Q p(\vr) \left( \vr - \frac{M}{|Q|} \right) \dx \dt \\&=\bigg[ \intQ{ \vr \vu \cdot \mathcal{B} \bigg[ \vr - \frac{M}{|Q|}  \bigg] }  \bigg]_{t= \tau_1}^{t = \tau_2} 
- \int_{\tau_1}^{\tau_2} \intQ{  \vr \vu \otimes \vu : \Grad \mathcal{B} \left[ \vr - \frac{M}{|Q|}  \right] }    \dt\\
&+ \int_{\tau_1}^{\tau_2} \intQ{  \mathbb{S} ( \Grad \vu ) : \Grad \mathcal{B} \left[ \vr - \frac{M}{|Q|}   \right] } \dt - \int_{\tau_1}^{\tau_2} \intQ{ \vr \vu \cdot \mathcal{B} \Big[ \Div (\vr \vu) \Big] } \dt\\&-\sum_{k\geq 1} \int_{\tau_1}^{\tau_2} \left( \intQ{  \vr \vc{F}_{k}(\vr, \vu) \cdot \mathcal{B} \left[ \vr - \frac{M}{|Q|}   \right] } \right) \D W_{k},
\end{aligned}
\end{align}
where 
\[
M = \intQ{ \vr(0, \cdot) }.
\]

Next, we recall the uniform bounds on the density and hypothesis \eqref{hypo1} implying
\begin{equation} \label{i7}
0 \leq \vr \leq \Ov{\vr},\ \frac{1}{|Q|}\intQ{ \vr } = \frac{M}{|Q|} \leq \Ov{\vr} - \delta\   \quad \p\mbox{-a.s.}
\end{equation} 
for some deterministic constant $\delta > 0$.
Setting 
\[
r = \frac{1}{2} \left( \frac{M}{|Q|} + \overline{\vr} \right) < \Ov{\vr} - \frac{\delta}{2}
\]
we deduce 
\[
\begin{split}
&\intQ{ p(\vr) \left (\vr - \frac{M}{|Q|} \right) } =
\int_{Q\cap[\vr \geq r]} p(\vr) \left (\vr - \frac{M}{|Q|} \right) \dx + \int_{Q\cap[\vr < r]}
p(\vr) \left (\vr - \frac{M}{|Q|} \right) \dx \\
&\qquad\geq \frac{\delta}{2} \int_{\vr \geq r} p(\vr) \ \dx - |Q| p(r) r 
\geq \frac{\delta}{2} \intQ{ p(\vr) } - |Q| \left( r p(r) + \frac{\delta}{2} p(r) \right). 
\end{split}
\]
Consequently, it follows from \eqref{i6}  $\prst$-a.s.
\begin{align} \label{i11}
\begin{aligned}
\frac{\delta}{2} &\int_{\tau_1}^{\tau_2} \intQ{ p(\vr) } \dt \\&\leq 
\left[ \intQ{ \vr \vu \cdot \mathcal{B} \left[ \vr - \frac{M}{|Q|}  \right] }  \right]_{t= \tau_1}^{t = \tau_2} - \int_{\tau_1}^{\tau_2} \intQ{  \vr \vu \otimes \vu : \Grad \mathcal{B} \left[ \vr - \frac{M}{|Q|}  \right] }    \dt
\\&+ \int_{\tau_1}^{\tau_2} \intQ{  \mathbb{S} ( \Grad \vu ) : \Grad \mathcal{B} \left[ \vr - \frac{M}{|Q|}   \right] } \dt - \int_{\tau_1}^{\tau_2} \intQ{ \vr \vu \cdot \mathcal{B} \Big[ \Div (\vr \vu) \Big] } \dt
\\
&- \sum_{k\geq 1}\int_{\tau_1}^{\tau_2} \left( \intQ{  \vr \vc{F}_{k}(\vr, \vu) \cdot \mathcal{B} \left[ \vr - \frac{M}{|Q|}   \right] } \right) \D W_{k} 
+ c (\tau_2 - \tau_1),
\end{aligned}
\end{align}
where $c$ is a deterministic constant. 

Next, we are going to estimate the second, third and fourth term on the right-hand side of \eqref{i11} by dissipation.
By virtue of the Korn--Poincar\' e inequality, 
\begin{equation} \label{i9}
\| \vu \|^2_{W^{1,2}_0 (Q; R^d)} \leq c_{KP} \intQ{ \mathbb{S}(\Grad \vu) : \Grad \vu }
\end{equation}
and the uniform bound \eqref{i7} we obtain
\begin{equation} \label{i10}
\intQ{ \vr |\vu|^2 } \leq\,c\, \intQ{ \mathbb{S}(\Grad \vu) : \Grad \vu }.
\end{equation}
Similarly, the Korn--Sobolev inequality yields
\begin{equation} \label{i10b}
\bigg(\intQ{ |\vu|^6 }\bigg)^{\frac{1}{3}} \leq\,c_{KS} \, \intQ{ \mathbb{S}(\Grad \vu) : \Grad \vu } 
\end{equation}
if $d=3$. Of course, the estimate holds for arbitrary exponent $q$ if $d=2$.
Consequently, we have $\prst$-a.s.
\begin{align*}
 \intQ{\vr \vu \otimes \vu : \Grad \mathcal{B} \left[ \vr - \frac{M}{|Q|}  \right] }&\lesssim \bigg(\intQ{ |\bfu|^6 }\bigg)^{\frac{1}{3}}\lesssim \intQ{ \mathbb{S}(\Grad \vu) : \Grad \vu },\\
 \intQ{  \mathbb{S} ( \Grad \vu ) : \Grad \mathcal{B} \left[ \vr - \frac{M}{|Q|}   \right] }&\lesssim\,\intQ{\vr^2}+\intQ{ \mathbb{S}(\Grad \vu) : \Grad \vu }\\
 &\lesssim\,1+\intQ{ \mathbb{S}(\Grad \vu) : \Grad \vu },\\
 \intQ{ \vr \vu \cdot \mathcal{B} \Big[ \Div (\vr \vu) \Big] }&\lesssim \intQ{\vr^2|\bfu|^2}\lesssim\intQ{ \mathbb{S}(\Grad \vu) : \Grad \vu },
\end{align*}
using properties of the Bogovskii operator, see \eqref{def:bog}, and boundedness of $\varrho$.
Plugging this into \eqref{i11} we conclude that $\prst$-a.s.
\begin{align}
\begin{aligned}
\frac{\delta}{2} \int_{\tau_1}^{\tau_2} \int_Q p(\vr) &\dx \dt - 
\left[ \intQ{ \vr \vu \cdot \mathcal{B} \left[ \vr - \frac{M}{|Q|}  \right] }  \right]_{t= \tau_1}^{t = \tau_2} \\&\leq
c_1 \int_{\tau_1}^{\tau_2} \intQ{ \mathbb{S}(\Grad \vu) : \Grad \vu } \dt 
+ c_2 (\tau_2 - \tau_1)\\
&- \sum_{k\geq 1}\int_{\tau_1}^{\tau_2} \left( \intQ{  \vr \vc{F}_{k}(\vr, \vu) \cdot \mathcal{B} \left[ \vr - \frac{M}{|Q|}   \right] } \right) \D W_{k}  
\end{aligned}\label{i12}
\end{align}
with deterministic constants $c_1$, $c_2$. 
Multiplying \eqref{i12} by a sufficiently small (deterministic) constant $\ep > 0$ and adding the resulting expression to 
the energy inequality \eqref{i3} we obtain the dissipation balance  $\prst$-a.s.
\begin{align} 
\nonumber
\Big[ \mathcal{E}(t)-& \ep\intQ{ \vr \vu \cdot \mathcal{B} \left[ \vr - \frac{M}{|Q|}  \right] } \Big]_{t = \tau_1}^{t = \tau_2} + 
\frac{1}{2} \int_{\tau_1}^{\tau_2} \intQ{ \left[ \mathbb{S}(\Grad \vu) : \Grad \vu + \ep \delta  p(\vr) \right] } \dt 
\\ &\leq C (\tau_2 - \tau_1)- \sum_{k\geq 1}\int_{\tau_1}^{\tau_2} \left( \intQ{ \vr \vc{F}_{k}(\vr, \vu) \cdot \left( \vu 
- \mathcal{B} \left[ \vr - \frac{M}{|Q|}  \right]  \right) } \right) \D W_{k},
\label{i13}
\end{align}
where $C$ is a deterministic constant, and where we have used 
\begin{align} \label{i14}
\begin{aligned}
\intQ{ \vr \sum_{k\geq 1} |\vc{F}_k (\vr, \vu )|^2 }&\leq\sum_{k\geq 1} f_k^2\intQ{\vr (1+|\vu|^{2\alpha}) } \\
& \leq  \frac14\intQ{  \mathbb{S}(\Grad \vu) : \Grad \vu   }+c,  
 \end{aligned}
\end{align}
with a deterministic constant $c$, cf. \eqref{p12}.

As the next step we aim at deriving higher moment estimates for the process
\[
\mathcal{D}(\tau) = \mathcal{E}(\tau)- \ep\intQ{ \vr \vu \cdot \mathcal{B} \left[ \vr - \frac{M}{|Q|}  \right] (\tau, \cdot)}
\]
in the spirit of It\^{o}'s formula applied to \eqref{i13}. 
For $\tau_1 > 0$ arbitrary, 
we consider the continuous process
\[
\begin{split}
\mathcal{D}_C (\tau) = &- \frac{1}{2} \int_{\tau_1}^{\tau} \intQ{ \Big[ \mathbb{S}(\Grad \vu) : \Grad \vu 
+ \varepsilon\delta  p(\vr) \Big]} \dt + C (\tau - \tau_1) \\ &-\sum_{k\geq 1} \int_{\tau_1}^{t} \left( \intQ{ \vr \vc{F}_{k}(\vr, \vu) \cdot \left( \vu 
- \mathcal{B} \left[ \vr - \frac{M}{|Q|}  \right]  \right) } \right) \D W_{k}.
\end{split}
\]
In accordance with the dissipation balance \eqref{i13}, the process 
\[
\mathcal{D}_M = \mathcal{D} - \mathcal{D}_C 
\]
is non--increasing in $(\tau_1, \infty)$. 

In order to apply It\^{o}'s formula to the process
$\mathcal{D}=\mathcal{D}_M+\mathcal{D}_C$ it suffices to regularise $\mathcal{D}_M$ in time. To avoid problems with progressive measurability
we introduce the backward regularization of a function $\mathcal{F} = \mathcal{F}(t)$ given by
\[
\mathcal{F}^\kappa (t) = \int_{-\kappa}^0 \mathcal{F}(t- s) \chi_\kappa(s) \D s, \ t > \kappa, 
\]
where $\chi_\kappa$ is a standard family of regularizing kernels. Applying the standard 
It\^o formula to $\mathcal{D}_C+\mathcal{D}_M^\kappa$  we get 
\begin{align}\label{eq:2810}
\begin{aligned}
\D \Phi ( \mathcal{D}_C &+ \mathcal{D}_M^\kappa ) = - \frac{1}{2} \Phi' \left( \mathcal{D}_C + \mathcal{D}_M^\kappa \right) 
\left( \intQ{ \Big[ \mathbb{S}(\Grad \vu) : \Grad \vu + \ep \delta  p(\vr) \Big]  } 
\right) \dt  \\ &+ C \Phi' \left( \mathcal{D}_C + \mathcal{D}_M^\kappa \right) \dt +
\Phi' \left( \mathcal{D}_C + \mathcal{D}_M^\kappa \right) \D \mathcal{D}^\kappa_M \\
&+ \frac{1}{2} \Phi'' \left( \mathcal{D}_C + \mathcal{D}_M^\kappa \right) 
\sum_{k\geq 1} \left( \intQ{ \vr \vc{F}_k(\vr, \vu) \cdot \left( \vu 
- \mathcal{B} \left[ \vr - \frac{M}{|Q|}  \right] \right) } \right)^2 \dt\\
&- \Phi' \left( \mathcal{D}_C + \mathcal{D}_M^\kappa \right) \sum_{k\geq 1}\left( \intQ{ \vr \vc{F}_{k}(\vr, \vu) \cdot \left( \vu 
- \mathcal{B} \left[ \vr - \frac{M}{|Q|}  \right]  \right) } \right) \D W_{k}
\end{aligned}
\end{align}
for any $\Phi \in C^2$. If, in addition, $\Phi' \geq 0$, we have 
\[
\Phi' \left( \mathcal{D}_C + \mathcal{D}_M^\kappa \right) \D \mathcal{D}^\kappa_M \leq 0
\]
using that $\mathcal{D}^\kappa_M$ is non-increasing. All other terms in \eqref{eq:2810}
converge to their counterparts as $\kappa \to 0$ and we obtain
\begin{equation} \label{m1a}
\begin{split}
\Big[ \Phi &\left( \mathcal{D}  \right) \Big]_{t = \tau_1 -}^{t = \tau_2-}+ \frac{1}{2} \int_{\tau_1}^{\tau_2} \Phi' \left( \mathcal{D} \right) 
\left( \intQ{ \Big[ \mathbb{S}(\Grad \vu) : \Grad \vu + \ep \delta  p(\vr) \Big]} \right) \dt\\ &\leq  
C \int_{\tau_1}^{\tau_2} \Phi' \left( \mathcal{D} \right)  \dt\\
&+ \frac{1}{2} \int_{\tau_1}^{\tau_2} \Phi'' \left( \mathcal{D} \right) 
\sum_{k \geq 1} \left( \intQ{ \vr \vc{F}_k(\vr, \vu) \cdot \left( \vu 
- \mathcal{B} \left[ \vr - \frac{M}{|Q|}  \right] \right) } \right)^2 \dt\\
&- \sum_{k\geq 1}\int_{\tau_1}^{\tau_2} \Phi' \left( \mathcal{D} \right) \left( \intQ{ \vr \vc{F}_{k}(\vr, \vu) \cdot \left( \vu 
- \mathcal{B} \left[ \vr - \frac{M}{|Q|}  \right]  \right) } \right) \D W_{k}
\end{split}
\end{equation}
for all $\Phi \in C^2$, $\Phi' \geq 0$. We clearly have
\begin{align*}
\mathcal D&\leq\,\intQ{\big(\varrho|\bfu|^2+p(\varrho)+1\big)} \lesssim\intQ{ \Big[ \mathbb{S}(\Grad \vu) : \Grad \vu + \varepsilon\delta p(\vr) +1\Big]}
\end{align*}
using \eqref{i10} as well as
\begin{align}\label{eq:611}
\begin{aligned}
\sum_{k\geq 1} &\left( \intQ{ \vr \vc{F}_k(\vr, \vu) \cdot \left( \vu 
- \mathcal{B} \left[ \vr - \frac{M}{|Q|}  \right] \right) } \right)^2 \\&\lesssim \sum_{k\geq 1}f_{k}^{2}\bigg(\intQ{ \varrho (1+|\bfu|^{\alpha+1})}\bigg)^2
\leq \kappa\mathcal D^2+c_{\kappa}
\end{aligned}
\end{align}
for an arbitrary $\kappa\in (0,1)$ using \eqref{p12}, \eqref{i7} and continuity of $\mathcal B$.
Plugging these estimates into \eqref{m1a} and applying expectations yields
\begin{equation} \label{m1}
\begin{split}
\Big[ \mathbb E& [|\mathcal{D}|^m ]\Big]_{t = \tau_1- }^{t = \tau_2+}+ D_{m} \int_{\tau_1}^{\tau_2} \mathbb E[|\mathcal{D}|^m]  \dt\leq  C_m(\tau_2-\tau_1)
\end{split}
\end{equation}
for all integers $m > 0$ with some positive constants $C_m,D_m$. Here the passage from $\tau_{2}-$ to $\tau_{2}+$ follows from the fact that $\mathcal{D}$ is a sum of a non-increasing function and a continuous one, specifically, $\mathcal{D}(\tau_{2}+)\leq \mathcal{D}(\tau_{2}-)$. Also note that we approximated the mapping $\mathcal D\mapsto |\mathcal D|^m$ by a sequence of smooth functions $\Phi$ with bounded derivatives.

Thus
Lemma \ref{uL1} together with  \eqref{m1} give rise to the uniform bound
\[
\expe{ |\mathcal{D}|^m (\tau ) } \leq \exp( - D_m \tau) \left( \expe{ |\mathcal{D}(0)|^m} - \frac{C_m}{D_m} \right) + \frac{C_m}{D_m}  \quad \mbox{for all}\quad \tau > 0. 
\]
In view of the bound
\begin{equation}\label{eq:62}
|\mathcal{D}(\tau)-\mathcal{E}(\tau)|\lesssim \sqrt{\mathcal{E}(\tau)}
\end{equation}
which holds true for all $\tau\geq 0$, we deduce that \eqref{eq:61} follows.
Consequently, the first claim of Theorem \ref{thm:bound} holds and the proof is complete.

\medskip

As  a consequence, we may also control the supremum over time inside expectation.

\begin{Corollary}\label{c:3.2}
Let $T>0$. Under the assumptions of Theorem~\ref{thm:bound} it holds
\begin{equation*}
\begin{split}
\expe{ \sup_{\tau \in [0,T]} \mathcal{E}^m (\tau) } &+ \expe{ \int_{0}^T \mathcal{E}^{m-1} \intQ{ \left[ \mathbb{S} (\Grad \vu) : \Grad \vu + p(\vr) \right] } 
\dt }\\
& \lesssim \expe{ \mathcal{E}^{m}(0)} +c_{T},
\end{split}
\end{equation*}
where the function  $T\mapsto c_{T}>0$ is locally bounded on $[0,\infty)$.
\end{Corollary}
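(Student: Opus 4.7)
The plan is to revisit the It\^o-type inequality \eqref{m1a} used in the proof of Theorem~\ref{thm:bound}, choosing $\Phi(x)=|x|^m$ (approximated, as in that proof, by smooth functions with bounded derivatives), but this time \emph{not} taking the expectation immediately. Instead, fix $\tau_1=0$ and a generic $\tau_2=t\in[0,T]$, and then take the supremum over $t\in[0,T]$ before applying $\mathbb{E}$. Since $\Phi'(\mathcal{D})=m|\mathcal{D}|^{m-1}\mathrm{sgn}(\mathcal{D})\geq 0$ only after a smoothing, one proceeds with a regularising sequence and passes to the limit as in the derivation of \eqref{m1}. The resulting inequality has the form
\begin{align*}
\sup_{\tau\in[0,T]}|\mathcal{D}(\tau)|^m
&+\int_0^T |\mathcal{D}|^{m-1}\intQ{\bigl[\mathbb{S}(\Grad\vu):\Grad\vu+\ep\delta\,p(\vr)\bigr]}\dt\\
&\lesssim |\mathcal{D}(0)|^m+\int_0^T|\mathcal{D}|^{m-1}\dt
+\int_0^T|\mathcal{D}|^{m-2}\sum_{k\ge 1}\Bigl(\intQ{\vr\vc{F}_k\cdot(\vu-\mathcal{B}[\cdot])}\Bigr)^{\!2}\dt
+\sup_{\tau\in[0,T]}|M(\tau)|,
\end{align*}
where $M$ denotes the stochastic integral on the right-hand side of \eqref{m1a}.

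The key new step is to bound $\mathbb{E}\sup_{\tau}|M(\tau)|$ via the Burkholder--Davis--Gundy inequality. This produces
$$
\mathbb{E}\sup_{\tau\in[0,T]}|M(\tau)|\lesssim \mathbb{E}\Bigl(\int_0^T |\mathcal{D}|^{2(m-1)}\sum_{k\ge 1}\Bigl(\intQ{\vr\vc{F}_k\cdot(\vu-\mathcal{B}[\cdot])}\Bigr)^{\!2}\dt\Bigr)^{1/2}.
$$
Applying \eqref{eq:611} with $\kappa$ small, the integrand is controlled by $C(|\mathcal{D}|^{2m}+|\mathcal{D}|^{2m-2})$. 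The standard absorption trick
$$
\mathbb{E}\Bigl(\int_0^T |\mathcal{D}|^{2m}\dt\Bigr)^{1/2}
\leq \mathbb{E}\Bigl[\sup_{\tau}|\mathcal{D}|^{m/2}\cdot\Bigl(\int_0^T|\mathcal{D}|^{m}\dt\Bigr)^{1/2}\Bigr]
\leq \tfrac12\,\mathbb{E}\sup_{\tau}|\mathcal{D}|^m+C\,\mathbb{E}\int_0^T|\mathcal{D}|^m\dt
$$
then allows one to move half of $\mathbb{E}\sup|\mathcal{D}|^m$ to the left-hand side.

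To finish, I invoke the already-proven moment bound from the proof of Theorem~\ref{thm:bound} (namely \eqref{m1}, i.e.\ $\mathbb{E}|\mathcal{D}(t)|^m\lesssim \mathbb{E}|\mathcal{D}(0)|^m+1$ uniformly in $t\in[0,T]$) to conclude
$\mathbb{E}\int_0^T|\mathcal{D}|^m\dt\lesssim T\bigl(\mathbb{E}\mathcal{E}(0)^m+1\bigr)$, and all the lower-order integrals $\mathbb{E}\int_0^T|\mathcal{D}|^j\dt$ for $j<m$ are handled in the same way. Finally, the pointwise comparison \eqref{eq:62}, $|\mathcal{D}-\mathcal{E}|\lesssim\sqrt{\mathcal{E}}$, together with Young's inequality, yields $\mathcal{E}^m\lesssim |\mathcal{D}|^m+1$ and $|\mathcal{D}|^{m-1}\gtrsim c_m\mathcal{E}^{m-1}-C$. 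The error term $-C\,\mathbb{E}\int_0^T\intQ{[\mathbb{S}(\Grad\vu):\Grad\vu+p(\vr)]}\dt$ produced by this last inequality is itself bounded, via the energy inequality \eqref{i3} applied at $m=1$, by $\mathbb{E}\mathcal{E}(0)+CT$, so it can be absorbed into the right-hand side, giving the claimed estimate with $c_T\lesssim 1+T$.

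The main obstacle I expect is the careful BDG step together with the regularisation of $\Phi(x)=|x|^m$: one needs $\Phi$ smooth with $\Phi'\geq 0$ so that the sign property $\Phi'(\mathcal{D})\,\mathrm{d}\mathcal{D}_M\leq 0$ survives, and simultaneously must handle the fact that the estimate on the stochastic integrand $\sum_k(\cdots)^2$ only gives $\kappa\mathcal{D}^2+c_\kappa$ rather than a pure power. Bookkeeping the lower-order dissipation error coming from the comparison $|\mathcal{D}-\mathcal{E}|\lesssim\sqrt{\mathcal{E}}$ and absorbing it into the left-hand side via the basic dissipation balance is the most delicate technical point.
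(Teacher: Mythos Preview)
Your proposal is correct and follows essentially the same route as the paper: start from \eqref{m1a} with $\Phi(x)=|x|^m$, take the supremum in $\tau\in[0,T]$ before the expectation, control the stochastic integral via Burkholder--Davis--Gundy combined with \eqref{eq:611} and the usual splitting/absorption of $\sup|\mathcal{D}|^m$, bound $\mathbb{E}\int_0^T|\mathcal{D}|^m\dt$ through \eqref{m1}, and finally convert from $\mathcal{D}$ to $\mathcal{E}$ using \eqref{eq:62}. Your write-up is in fact slightly more explicit than the paper's in the last step, where you track the lower-order dissipation error arising from $|\mathcal{D}|^{m-1}\gtrsim c_m\mathcal{E}^{m-1}-C$ and close it with the $m=1$ balance (for which the combined inequality \eqref{i13}, rather than \eqref{i3} alone, supplies the $p(\vr)$ contribution).
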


\begin{proof}
We consider \eqref{m1a} and take first supremum over time and then expectation. In order to estimate the stochastic integral, we apply Burkholder--Davis--Gundy's inequality,  \eqref{eq:611} and Young's inequality to obtain
\begin{align*}
&\expe{\sup_{\tau\in[0,T]}\left|\sum_{k\geq 1}\int_{0}^{\tau}  |\mathcal{D}|^{m-1} \left( \intQ{ \vr \vc{F}_{k}(\vr, \vu) \cdot \left( \vu 
- \mathcal{B} \left[ \vr - \frac{M}{|Q|}  \right]  \right) } \right) \D W_{k}\right|}
\end{align*}
\[ \leq \mathbb{E} \left[ \left( \int_{0}^T (2\kappa | \mathcal{D} |^{2
   m} + c_{\kappa}) \dt\right)^{\frac{1}{2}} \right] 
\leq \kappa \mathbb{E} \left[\sup_{\tau \in [0, T]} | \mathcal{D} |^m(\tau)\right] +
    \mathbb{E} \left[ \int_{0}^T | \mathcal{D} |^m \dt \right] +
   c_{\kappa, T}. \]
The first term on the above right hand side can be absorbed into the left hand side of the estimate. The second term on the  right hand side is controlled in view of \eqref{m1} by the initial value. Altogether, we deduce
$$
\expe{\sup_{\tau\in[0,T]}|\mathcal{D}|^{m}(\tau)}+\expe{\int_{0}^{T}|\mathcal{D}|^{m-1}\intQ{\left[ \mathbb{S} (\Grad \vu) : \Grad \vu + p(\vr) \right]}\dt}
$$
$$
\lesssim \expe{ |\mathcal{D}|^{m}(0)} +c_{T},
$$
which yields the claim by using \eqref{eq:62}.
\end{proof}

\section{Asymptotic compactness}
\label{co}

Our next goal is to prove Theorem \ref{thm:comp}. 

\subsection{Global energy estimate}\label{s:gle}

First fix a time interval $[-T,T]$. In view of the uniform bounds established in 
Theorem \ref{thm:bound} and Corollary~\ref{c:3.2}, we claim that
\begin{equation} \label{co1}
\begin{aligned}
\expe{ \sup_{\tau \in [-T,T]} \mathcal{E}^m_n (\tau) } &+ \expe{ \int_{-T}^T \mathcal{E}^{m-1}_n \intQ{ \left[ \mathbb{S} (\Grad \vu_n) : \Grad \vu_n + p(\vr_n) \right] } 
\dt }\\
&\qquad \lesssim \mathcal{E}_\infty (m)+c_{2T}
\end{aligned}
\end{equation}
$m=1, \dots, 4$, 
for all $n=1,2,\dots.$ Indeed, from Corollary~\ref{c:3.2} applied to the dissipative martingale solution $[\vr_{n},\vu_{n},W_{n}]$ on the time interval $[-T,T]$ such that $T_{n}>T$, we obtain a bound of the left hand side in \eqref{co1} of the form
$$
\lesssim\expe{ \mathcal{E}^{m}_{n}(-T)}+c_{2T},
$$
where the implicit constant as well as $c_{2T}>0$ is universal, i.e. independent of the solution $[\vr_{n},\vu_{n},W_{n}]$. Recall that   $[\vr_{n},\vu_{n},W_{n}]$ solves the system on $[-T_{n},\infty)$ and that $T_{n}\to\infty$. Hence, employing Theorem~\ref{thm:bound}, in particular \eqref{eq:61}, and the uniform bound \eqref{eq:ass}, we conclude that if $n$ is sufficiently large so that $-T+T_{n}> M$ for some $M>0$  large enough, then
$$
\expe{ \mathcal{E}^{m}_{n}(-T)}\lesssim \mathcal{E}_{\infty}(m),
$$
which proves \eqref{co1} for $n\geq n(T)$ large enough.

On the other hand, for $n< n(T)$ and $T_{n}>T$ the left hand side of \eqref{co1} is bounded using Corollary~\ref{c:3.2} by
\begin{equation*}
\begin{aligned}
\lesssim  \expe{ \mathcal{E}_{n}^{m}(-T_{n})} +c_{-T+T_{n}}\lesssim 1,
\end{aligned}
\end{equation*}
where the last inequality follows from the fact that  $T\mapsto c_{T}$ is locally bounded on $[0,\infty)$ and $-T+T_{n}\leq M$.

The bound \eqref{co1} for the case $T_{n}<T$ follows from \eqref{eq:ass} and Remark~\ref{r:2.4}.

\subsection{Pressure estimates}
\label{sec:pressureestimates}

Given \eqref{co1} we show that the pressure $p(\vr_n)$ is bounded in a reflexive space $L^r((-T,T) \times Q)$ for some 
$r > 1$. To see this, we use the identity \eqref{i5} with 
\[
\beta (\vr_n) = (\Ov{\vr} - \vr_n )^{-\omega}, \ \omega > 0 \ \mbox{sufficiently small},
\]
obtaining $\prst_{n}$-a.s.
\begin{align} \label{co2}
\begin{aligned}
&\int_{-T}^{T} \intQ{ p(\vr_n) (\Ov{\vr} - \vr_n )^{-\omega} } \dt\\ &=
\left[ \intQ{ \vr_n \vu_n \cdot \mathcal{B} \left[ (\Ov{\vr} - \vr_n )^{-\omega} - \frac{1}{|Q|} \intQ{ (\Ov{\vr} - \vr_n )^{-\omega} } \right] }  \right]_{t= -T}^{t = T}
\\&+ \frac{1}{|Q|} \int_{-T}^{T} \left( \intQ{ p(\vr_n) } \intQ{ (\Ov{\vr} - \vr_n )^{-\omega} } \right) \dt \\
&- \int_{-T}^{T} \intQ{  \vr_n \vu_n \otimes \vu_n : \Grad \mathcal{B} \left[ (\Ov{\vr} - \vr_n )^{-\omega} - \frac{1}{|Q|} \intQ{ (\Ov{\vr} - \vr_n )^{-\omega} }  \right] } \dt\\
&+ \int_{-T}^{T} \intQ{  \mathbb{S} ( \Grad \vu_n ) : \Grad \mathcal{B} \left[(\Ov{\vr} - \vr_n )^{-\omega} - \frac{1}{|Q|} \intQ{ (\Ov{\vr} - \vr_n )^{-\omega} }  \right] } \dt \\
& - \int_{-T}^{T} \intQ{ \vr_n \vu_n \cdot \mathcal{B} \Big[ \Div ( (\Ov{\vr} - \vr_n )^{-\omega} \vu_n ) \Big] } \dt
\\
&- \sum_{k\geq 1}\int_{-T}^{T} \left( \intQ{  \vr_n \vc{F}_{k}(\vr_n, \vu_n) \cdot \mathcal{B} \left[ (\Ov{\vr} - \vr_n )^{-\omega} - \frac{1}{|Q|} \int_Q(\Ov{\vr} - \vr_n )^{-\omega}  \dx  \right] } \right) \D W_{k} \\
&+ I_n,
\end{aligned}
\end{align}
where we have set
\[
\begin{split}
&I_n =  \int_{-T}^{T} \intQ{ \vr_n \vu_n 
\\
&\qquad\cdot \mathcal{B} \left[ \left( 
\frac{    
(\Ov{\vr} - (\omega - 1) \vr_n) }{(\Ov{\vr} - \vr_n )^{\omega + 1} } \right) \Div \vu_n 
- \frac{1}{|Q|}\intQ{ \left( 
\frac{    
(\Ov{\vr} - (\omega - 1) \vr_n) }{(\Ov{\vr} - \vr_n )^{\omega + 1} } \right)\Div \vu_n }  \right] } \dt.
\end{split}
\] 

In view of hypothesis \eqref{p1}, the pressure potential satisfies 
\begin{equation} \label{co3}
(\Ov{\vr} - \vr)^{-\beta + 1} \aleq P(\vr); 
\end{equation}
whence 
\[
\| (\Ov{\vr} - \vr_n )^{-\omega} \|^s_{L^s(Q)} \aleq \intQ{ P(\vr_n ) } \leq \mathcal{E} 
\ \mbox{as long as}\ \omega s \leq (\beta - 1).  
\]
Consequently, if $\omega > 0$ is chosen small enough, all integrals on the right--hand side of \eqref{co2} except $I_n$ are controlled 
by the energy bounds \eqref{co1}.

As for $I_n$, the smoothing properties of the operator $\mathcal{B}$ specified in \eqref{def:bog} 
can be used to control $I_n$ by the energy as long as the quantity 
\[
(\Ov{\vr} - \vr_n)^{-(\omega + 1)} \Div \vu_n 
\] 
can be estimated in $L^{2}(-T,T;L^1(Q))$. In view of \eqref{co3}, this requires
\[
2 (\omega + 1) \leq (\beta - 1), 
\]
meaning $0< \omega\leq \frac{\beta-3}{2}$
which is possible as $\beta > 3$. 

Passing to expectations in \eqref{co2} we may therefore infer that
\begin{equation} \label{co4}
\expe{ \int_{-T}^T \intQ{ p(\vr_n) (\Ov{\vr} - \vr_n )^{-\omega} } \dt } \leq c \left( \mathcal{E}_\infty(m), T \right),\ 
0 < \omega \leq \frac{ \beta - 3}{2}.
\end{equation} 
Note that, in view of hypothesis \eqref{p1}, 
\[
\left\| p(\vr_n) \right\|^{ \frac{ \beta + \omega }{\beta} }_{L^ \frac{ \beta + \omega }{\beta}((-T,T) \times Q) }
\aleq \left( 1 + \int_{-T}^T \intQ{ p(\vr_n) (\Ov{\vr} - \vr_n )^{-\omega} } \dt  \right),
\]
whence
$$
\expe{ \left\| p(\vr_n) \right\|^{ \frac{ \beta + \omega }{\beta} }_{L^ \frac{ \beta + \omega }{\beta}((-T,T) \times Q) } }\leq c \left( \mathcal{E}_\infty(m), T \right).
$$

\subsection{Limit process}

The energy estimate \eqref{co1} and the pressure estimate \eqref{co4} are exactly the same as those obtained in the 
existence theory. Following the stochastic compactness arguments of \cite[Chapter 4]{BrFeHobook} or rather \cite[Chapter 7]{BrFeHobook} which also gives the necessary  additional details regarding the trajectory space $\mathcal{T}$,
we may use  Jakubowski--Skorokhod's representation theorem and find a new sequence of random variables $\tvr_n$, $\tvu_n$, 
with associated cylindrical Wiener processes $\tvW_n$ defined on the standard probability space $(\Omega,\mathfrak{F},\prst)=([0,1], \mathfrak{B}, 
\D y )$ such that (up to a subsequence)
\[
\mathcal{L}_{\mathcal{T}} [\vr_n, \vu_n, W_n ] = \mathcal{L}_{\mathcal{T}}[\tvr_n, \tvu_n, \tvW_n]  
\]
for any $1\leq q<\infty$. In addition, there exists a process $[\vr,\vu,W]$ such that
\begin{equation} \label{co5}
\begin{split}
\tvr_n &\to \vr \ \mbox{in}\ C_{\rm{weak,loc}}([-T; T]; L^q(Q)),\\
\tvu_n &\to \vu \ \mbox{in}\ \left( L^2(-T,T; W^{1,2}(Q; R^d)) , w \right)\\ 
\tvW_n &\to  W \ \mbox{in}\ C([-T,T]; \mathfrak{U}_0 )  
\end{split}
\end{equation}
for any $T > 0$ $\prst-$a.s. In particular, 
\[
\mathcal{L}_{\mathcal{T}} [\vr_n , \vu_n,  W_n ] 
= \mathcal{L}_{\mathcal{T}} [\tvr_n , \tvu_n,  \tvW_n ] \to 
\mathcal{L}_{\mathcal{T}} [\vr , \vu,  W ] \ \mbox{narrowly as}\ n \to \infty.  
\]

\subsection{Asymptotic compactness of densities}
\label{s:4.4}

To finish the proof of Theorem \ref{thm:comp}, it remains to show that $[\vr, \vu,  W]$ is an entire solution. 
This can be done similarly to \cite[Section 4.5]{BrFeHobook} as soon as we are able to show strong convergence of the 
density sequence $\{ \tvr_n \}_{n=1}^\infty$ $\prst-$a.s. This is a delicate issue as we have no information on compactness of 
``initial data''. 

First observe that \eqref{co5} yields the equation of continuity for the limit functions, namely 
\begin{equation} \label{co6}
\int_R \intQ{ \left[ \vr \partial_t \varphi + \vr \vu \cdot \Grad \varphi \right] } \dt = 0 
\end{equation}
for any $\varphi \in C^1_c(R \times \Ov{Q})$ $\prst$-a.s. Moreover, as $0 \leq \vr \leq \Ov{\vr}$ and $\vu 
\in L^2_{\rm loc}(R; W^{1,2}_0(Q; R^d)$, we may use the standard regularization technique of DiPerna and Lions \cite{DL} 
to deduce the renormalized version of \eqref{co6}, 
\begin{equation} \label{co7}
\int_R \intQ{ \left[ \vr \log(\vr) \partial_t \varphi + \vr \log(\vr) \vu \cdot \Grad \varphi - 
\vr \Div \vu \varphi \right] } \dt = 0 
\end{equation}
for any $\varphi \in C^1_c(R \times \Ov{Q})$ $\prst$-a.s.

Next, given $\varphi \in C^1_c(R \times \Ov{Q})$, we also have 
\begin{equation} \label{co8}
\int_R \intQ{ \left[ \tvr_n \log(\tvr_n) \partial_t \varphi + \tvr_n \log(\tvr_n) \tvu_n \cdot \Grad \varphi - 
\tvr_n \Div \tvu_n \varphi \right] } \dt = 0 
\end{equation}
for any $\varphi \in C^1_c(R \times \Ov{Q})$ $\prst$-a.s. To be able to let $n \to \infty$ in \eqref{co8}, we must extend 
the convergence stated in \eqref{co5} to nonlinear functions of $(\vr, \vu, \Grad \vu)$. This is possible as the Skorokhod 
argument can be extended to any composition (cf. \cite[Proposition 4.5.5]{BrFeHobook})
\[
B(\vr, p(\vr), \vu, \Grad \vu) 
\]
as long as
\begin{equation} \label{co9}
\expe{ \int_{-T}^T \intQ{ | B(\vr_n, p(\vr_n), \vu_n, \Grad \vu_n) |^r } } \dt \leq c (T ) 
\end{equation}
uniformly for
$n \to \infty$ for some $r > 1$.
Consequently, we may assume, in addition to \eqref{co5} that 
\begin{equation} \label{co10}
B(\tvr_n, p(\tvr_n), \tvu_n, \Grad \tvu_n) \to 
\Ov{B(\vr, p(\vr), \vu, \Grad \vu) } \ \mbox{weakly in}\ L^r ((-T,T) \times Q) 
\end{equation}
for any $T > 0$ $\prst$-a.s. as soon as \eqref{co9} holds. In particular, we may let $n \to \infty$ in \eqref{co8} to obtain 
\begin{equation} \label{co11a}
\int_R \intQ{ \left[ \Ov{\vr \log(\vr) } \partial_t \varphi + \Ov{ \vr \log(\vr)  \vu } \cdot \Grad \varphi - 
\Ov{ \vr \Div \vu } \varphi \right] } \dt = 0 
\end{equation}
for any $\varphi \in C^1_c(R \times \Ov{Q})$ $\prst$-a.s. 

Subtracting \eqref{co7} from \eqref{co11a} and using spatially homogeneous test functions $\varphi$ yields an ODE for the 
oscillation defect 
\[
D(t) = \intQ{ \left[ \Ov{\vr \log(\vr) } - \vr \log(\vr) \right] (t, x) },
\]
namely 
\begin{equation} \label{co12}
\frac{\D }{\dt} D(t) + \intQ{ \left[ \Ov{\vr \Div \vu } - \vr \Div \vu \right] (t,x) } = 0 \ \mbox{for a.a.}\ t \in R.
\end{equation}

The existence theory for the compressible Navier--Stokes system leans on \emph{Lions identity}
\begin{equation} \label{co13}
\intQ{ \left[ \Ov{\vr \Div \vu } - \vr \Div \vu \right] (t,x) } = 
\intQ{ \left[ \Ov{ p(\vr) \vr } - \Ov{p(\vr)} \vr \right] (t,x) } \ \mbox{for a.a.}\ t,
\end{equation}
see Lions \cite{LI4}. Validity of \eqref{co13} has been extended to sequences of (approximate) solutions in \cite[Section 4.5]{BrFeHobook}. Plugging \eqref{co13} in \eqref{co12} yields $\prst$-a.s. 
\begin{equation} \label{co14}
\frac{\D }{\dt} D(t) + \intQ{ \left[ \Ov{ p(\vr) \vr } - \Ov{p(\vr)} \vr \right] (t,x) } = 0 \ \mbox{for a.a.}\ t \in R.
\end{equation}
As $\vr \mapsto p(\vr)$ is non--decreasing, we have
\[
\intQ{ \left[ \Ov{ p(\vr) \vr } - \Ov{p(\vr)} \vr \right] (t,x) } \geq 0; 
\]
whence the defect $D$ is a non--increasing function of time. This immediately yields the desired conclusion 
$D \equiv 0$ as soon as we know that $D(t_0) = 0$ for some $t_0$, which is for instance the case for solutions of the initial--value problem 
emanating from a compact sequence of initial data. 

In our situation, we need to proceed differently. We apply pathwise the deterministic argument borrowed from \cite{EF53}.  
On the one hand, as $\vr \in [0, \Ov{\vr}] \mapsto \vr \log (\vr)$ is $\alpha-$H\" older continuous for any $0 < \alpha < 1$, we have 
\[
|\vr_n \log(\vr_n) - \vr \log(\vr) | \aleq | \vr_n - \vr |^\alpha; 
\]
whence, by H\" older's inequality, 
\[
\intQ{ \left[\Ov{\vr \log(\vr)} - \vr \log(\vr)\right] } 
\aleq \lim_{n \to \infty} \intQ{ | \vr_n - \vr |^{\alpha } } \aleq 
\left( \lim_{n \to \infty}\intQ{ | \vr_n - \vr |^{ \gamma + 1 } } \right)^{\frac{\gamma + 1}{\alpha}}. 
\]
On the other hand, as the pressure satisfies \eqref{p1}, 
\[
\intQ{ \left[\Ov{p(\vr) \vr} - \Ov{p(\vr)} \vr\right] } \geq a \intQ{\left[\Ov{\vr^\gamma \vr} - \Ov{\vr^\gamma } \vr\right] } 
\geq a \lim_{n \to \infty} \intQ{| \vr_n - \vr |^{\gamma + 1} }.
\]
Consequently, we deduce from \eqref{co14} 
\begin{equation} \label{co15}
\frac{\D }{\dt} D(t) + \theta D(t)^{\frac{\gamma + 1}{\alpha}} \leq 0 \ \mbox{for a.a.}\ t \in R
\end{equation}
for some $\theta > 0$. Since $0 \leq D \leq \Ov{D}$ for any $t \in R$, we obtain the desired conclusion $D = 0$ yielding 
strong $L^1-$convergence of $\{ \tvr_n \}_{n=1}^\infty$ $\prst$-a.s. 

We have proved Theorem \ref{thm:comp}.

\section{Construction of stationary solutions}
\label{cs}

The goal of this section is to prove Theorem~\ref{thm:main}. Therefore, let $[\varrho, \tmmathbf{u}, W]$ be a dissipative martingale solution on $[0,\infty)$ defined on some stochastic basis $(\Omega,\mathfrak{F},(\mathfrak{F}_{t})_{t\geq0},\prst)$ and satisfying \eqref{hypo1}.
We define the probability measures
\begin{equation}\label{eq:6}
\nu_S \equiv \frac{1}{S} \int_0^S \mathcal{L}_{\mathcal{T}} (S_t [\varrho, \tmmathbf{u},
   W]) \dt \in \mathfrak{P} (\mathcal{T}) .
   \end{equation}
More precisely, the time average  is defined as a narrow limit of  Riemann sums, i.e. for every $F\in BC(\mathcal{T})$ we have for a sequence of equidistant partitions $\{0=t_{0}<t_{1}<\cdots<t_{N}=S\}$
 $$
 \left[\frac{1}{S}\int_{0}^{S}\mathcal{L}(S_{t}[\varrho, \tmmathbf{u}, W])\dt\right](F)=\lim_{N\to\infty}\left[\frac{1}{N}\sum_{i=0}^{N-1}\mathcal{L}(S_{t_{i}}[\varrho, \tmmathbf{u}, W])(F)\right].
 $$
 As explained in Remark~\ref{r:2.4}, also in \eqref{eq:6} we tacitly regard functions defined on  time intervals $[-t,\infty)$ as trajectories on $R$ by extending them to $s\leq-t$ by the value at $-t$.

The proof of Theorem~\ref{thm:main} now proceeds in two main steps. First, we prove tightness of the above measures and apply Prokhorov's theorem in order to obtain a narrowly converging subsequence. Second, in view of Theorem~\ref{thm:comp} we identify the limit measure as a law of a stationary solution.

\begin{Proposition}\label{p:5.1}
The family of measures $\{\nu_{S};\,S>0\}$
is tight in $\mathcal{T}$.
\end{Proposition}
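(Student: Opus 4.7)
Since $\mathcal{T}$ is a product of three Polish (sub)spaces, it suffices to establish tightness of each of the three marginals of $\nu_S$ separately. By the definition
\[
\nu_S(A) = \frac{1}{S}\int_0^S \p\bigl[\mathcal{S}_t[\vr,\vu,W] \in A\bigr]\,\dt,
\]
combined with Chebyshev's inequality applied to seminorms whose sublevel sets are relatively compact in each factor, the problem reduces to producing \emph{uniform-in-$t$} moment bounds for the shifted processes $\mathcal{S}_t[\vr,\vu,W]$, $t \geq 0$. These all follow by repeating the argument of Section~\ref{s:gle} with the single solution $[\vr,\vu,W]$: Theorem~\ref{thm:bound} ensures $\sup_{t \geq 0}\expe{\mathcal E(t)^m} \leq \mathcal{E}_\infty(m)$, and then applying Corollary~\ref{c:3.2} on the interval $[t-T, t+T]$ (using the extension convention of Remark~\ref{r:2.4} when $t < T$) yields, for every $T > 0$ and $m\geq 1$,
\[
\sup_{t\geq 0} \expe{\sup_{\sigma\in[-T,T]} \mathcal E(t+\sigma)^m + \int_{-T}^T \|\vu(t+\sigma)\|^2_{W^{1,2}_0}\,\dd\sigma} \leq C(T,m).
\]

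\textbf{Velocity and Wiener components.} Since $L^2(-T,T;W^{1,2}_0)$ is a reflexive Hilbert space, balls are weakly compact, and the weak topology on $L^2_{\rm loc}(R;W^{1,2}_0)$ is generated by restrictions to compact intervals; hence sets of the form $\{\vv : \|\vv\|_{L^2(-T_j, T_j; W^{1,2}_0)} \leq R_j,\ \forall j\}$ are compact, and Chebyshev together with the $L^2_tW^{1,2}_x$ moment bound above gives tightness of the velocity marginal. For the Wiener component, $\mathcal{S}_\tau W$ is a continuous $\mathfrak U_0$-valued process with $\mathcal{S}_\tau W(0)=0$ that coincides on $[-\tau,\infty)$ with a standard $\mathfrak U$-cylindrical Wiener process (recentered at $t=0$) and is constant equal to $-W(\tau)$ on $(-\infty,-\tau)$. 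The resulting Gaussian increment estimates $\expe{\|\mathcal{S}_\tau W(t)-\mathcal{S}_\tau W(s)\|_{\mathfrak U_0}^{2p}} \leq C_p|t-s|^p$, valid uniformly in $\tau$, imply via Kolmogorov's continuity theorem that the laws are tight in $C([-T,T];\mathfrak U_0)$ for every $T$, hence in $C_{\rm loc,0}(R;\mathfrak U_0)$.

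\textbf{Density component and main obstacle.} The hard-sphere bound gives the deterministic estimate $\|\vr(t+\sigma)\|_{L^q(Q)} \leq \Ov\vr\,|Q|^{1/q}$. For equicontinuity in the weak-$L^q$ topology, the continuity equation together with $\|\vr\vu\|_{L^2(Q)} \leq \sqrt{2\Ov\vr\,\mathcal E}$ gives, for $\psi \in C^1(\Ov Q)$,
\[
\bigl|\langle \vr(t+\tau_2)-\vr(t+\tau_1),\psi\rangle\bigr| \leq C_\psi\, |\tau_2-\tau_1|^{1/2}\Bigl(\int_{\tau_1}^{\tau_2}\mathcal E(t+\sigma)\,\dd\sigma\Bigr)^{1/2}.
\]
Choosing $\psi$ in a countable dense subset of $L^{q'}(Q)\cap C^1(\Ov Q)$, the uniform energy moment bound yields an equicontinuity modulus in expectation, and Arzel\`a--Ascoli in $C_{\rm weak}([-T,T];L^q(Q))$ combined with Chebyshev gives tightness of the density marginal. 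The principal obstacle throughout this argument is ensuring that the per-interval estimate is truly independent of the shift $t$; this is precisely where Theorem~\ref{thm:bound} is essential, as it controls $\expe{\mathcal E(t)^m}$ uniformly in $t$, so that applying Corollary~\ref{c:3.2} with $\mathcal E(t-T)$ playing the role of ``initial energy'' yields constants independent of $t$.
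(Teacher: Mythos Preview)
Your proof is correct and follows essentially the same approach as the paper: reduce to uniform-in-$t$ tightness of the shifted laws $\mathcal{L}(\mathcal{S}_t[\vr,\vu,W])$, obtain the required uniform moment bounds from Theorem~\ref{thm:bound} and Corollary~\ref{c:3.2}, and then treat each of the three marginals separately. The only cosmetic difference is in the Wiener component, where the paper simply observes that the shifts $\mathcal{S}_tW$ all have (on any interval $[-T,T]$ with $t\geq T$) the law of a Wiener process vanishing at~$0$, so tightness is immediate; your Kolmogorov-type argument is slightly more laborious but handles the constant tail on $(-\infty,-t]$ more explicitly and is equally valid.
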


\begin{proof}
Choose a bounded interval $[- T, T]$. In order to prove tightness of $\nu_{S}$, we first  prove tightness of the laws of the time shifts $S_{t}[\vr,\vu,W]$, $t\geq 0$. Since $[\vr,\vu,W]$ solves the system on $[0,\infty)$, its time shift $S_{t}[\vr,\vu,W]$ is a solution on $[-t,\infty)$. In view of \eqref{hypo1}, we may apply the considerations of Section~\ref{s:gle} applied to the time shifts $S_{t}[\vr,\vu,W]$ to deduce
\begin{equation} \label{co11}
\begin{aligned}
\expe{ \sup_{s \in [-T,T]} \mathcal{E}^m (s+t) } &+ \expe{ \int_{-T}^T \intQ{ |\Grad \vu|^{2}(\cdot+t) }
\,\D s } \lesssim \mathcal{E}_\infty (m)+c_{2T}
\end{aligned}
\end{equation}

 The important point is that the bound depends on the length of the time interval but not on the time shift. As a consequence, the time shifts $\vu(\cdot+t)$ are tight on $L^{2}_{\rm{loc}}(R;W^{1,2}_{0}(Q;R^{d}))$ equipped with the weak topology.
Moreover, from the continuity equation we get for all $t\geq 0$
\[ \mathbb{E} [\| \varrho (\cdummy + t) \|_{C^1 ([- T, T] ; W^{- 1, 2})}]
   \leqslant C (T) \]
This implies tightness of
$\varrho (\cdummy + t)$ on $C_{\tmop{weak}, \tmop{loc}} (R ; L^q(Q))$, $1\leq q<\infty$, using also  the boundedness of ${\varrho}$, . 

This already implies tightness of the  projection of the time averaged measures $\nu_{S}$ to the first two components. Indeed, if $\varepsilon > 0$ is given and $K_{\varepsilon}$ is the associated compact set in $C_{\tmop{weak}, \tmop{loc}} (R ; L^q(Q))$,
such that
$$
\sup_{t\geq0}\mathcal{L} (S_t \varrho) (K^c_{\varepsilon})<\varepsilon,
$$
then
\[ \frac{1}{S} \int_0^S \mathcal{L} (S_t \varrho) (K^c_{\varepsilon}) d t < \varepsilon .
\]
The argument for the projection to $\vu$ is the same.

In addition, we recall that the shift chosen on the noise is $S_t W = W
(\cdummy + t) - W (t)$. Consequently, every $S_t W$ is a Wiener process with
$S_t W (0) = 0$. That means that all $S_t W$ have the same law which is tight on
$C_{\tmop{loc}, 0} (R, \mathfrak{U}_0)$. Altoghether, the claim follows.
\end{proof}

As the next step, we observe that limits of the ergodic averages are invariant under various shifts. This in particular implies shift invariance of any accumulation point of the time averages $\nu_{S}$ as we will see below.

\begin{Lemma}\label{l:5.2}
It holds
\[ \frac{1}{S - \kappa_3} \int_{\kappa_1}^{S + \kappa_2} \mathcal{L} (S_{t +
   \tau} [\varrho, \tmmathbf{u}, W]) d t - \frac{1}{S} \int_0^S \mathcal{L}
   (S_t [\varrho, \tmmathbf{u}, W]) d t \rightarrow 0\]
narrowly
as $S \rightarrow \infty$ for all $\kappa_1, \kappa_2, \kappa_3, \tau \in R$.
\end{Lemma}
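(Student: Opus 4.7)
The plan is to test the difference of signed measures against an arbitrary $F \in BC(\mathcal{T})$, reducing the narrow convergence to a scalar estimate. Since each $\mathcal{L}(S_t[\vr,\vu,W])$ is a probability measure on $\mathcal{T}$, the function $g(t) := \mathcal{L}(S_t[\vr,\vu,W])(F) = \stred[F(S_t[\vr,\vu,W])]$ satisfies $|g(t)| \leq \|F\|_\infty$ for all $t \in R$. The problem then becomes a purely elementary fact about Ces\`aro-type averages of bounded functions being insensitive to bounded perturbations of both the integration window and a shift of the argument.

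First I would apply the change of variables $s = t + \tau$ in the first integral to absorb the shift $\tau$ into the integration limits, replacing $\kappa_1, \kappa_2$ by $a := \kappa_1 + \tau$ and $b := \kappa_2 + \tau$. Then I would decompose
\[\frac{1}{S-\kappa_3}\int_{a}^{S+b} g(s)\,ds \;-\; \frac{1}{S}\int_0^S g(t)\,dt\]
into a scaling mismatch term $\bigl(\tfrac{1}{S-\kappa_3}-\tfrac{1}{S}\bigr)\int_0^S g(t)\,dt$, bounded in absolute value by $\tfrac{|\kappa_3|}{|S-\kappa_3|}\|F\|_\infty$, and a boundary term $\tfrac{1}{S-\kappa_3}\bigl(\int_a^{S+b} g(s)\,ds - \int_0^S g(s)\,ds\bigr)$, whose integrand is supported on the symmetric difference $[a, S+b]\triangle[0,S]$ of Lebesgue measure at most $|a|+|b|$, hence bounded by $\tfrac{|a|+|b|}{|S-\kappa_3|}\|F\|_\infty$. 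Both terms vanish as $S\to\infty$, and since the bound is uniform in $F$ with $\|F\|_\infty \leq 1$, this yields the desired narrow convergence to zero for every $F \in BC(\mathcal{T})$.

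I do not expect a genuine obstacle here, as the argument relies only on the triangle inequality together with the uniform boundedness of the probability measures $\mathcal{L}(S_t[\vr,\vu,W])$. The only minor subtlety is to recall that the time-averaged measure $\nu_S$ in \eqref{eq:6} is itself defined via narrow limits of Riemann sums, but this construction commutes with evaluation on a fixed $F \in BC(\mathcal{T})$ and reduces the statement to the one-variable computation sketched above.
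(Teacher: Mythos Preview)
Your proposal is correct and follows essentially the same approach as the paper: test against an arbitrary $F\in BC(\mathcal{T})$, absorb the shift $\tau$ by a change of variables, and split the difference into a scaling mismatch and boundary terms over intervals of fixed length, all controlled by $\|F\|_\infty$ and vanishing as $S\to\infty$. The only cosmetic difference is that the paper writes $\mathcal{L}(S_{t+\tau}[\vr,\vu,W])(G)=\mathcal{L}([\vr,\vu,W])(G\circ S_{-t-\tau})$ before changing variables and factors out $\tfrac{S}{S-\kappa_3}$ rather than $\tfrac{1}{S-\kappa_3}-\tfrac{1}{S}$, but this is the same algebra.
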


\begin{proof}
Let $G \in BC (\mathcal{T})$. Using the continuity of the time shifts $t\mapsto S_{t}$ on $\mathcal{T}$, we have $G\circ S_{t} \in BC (\mathcal{T})$ and it holds
\begin{align*}
  \frac{1}{S - \kappa_3} \int_{\kappa_1}^{S +
   \kappa_2} \mathcal{L} (S_{t + \tau} [\varrho, \tmmathbf{u}, W]) (G) \dt
   & =
   \frac{1}{S - \kappa_3} \int_{\kappa_1}^{S +
   \kappa_2} \mathcal{L} ([\varrho, \tmmathbf{u}, W]) (G \circ S_{- t -
   \tau}) \dt \\
& =\frac{1}{S - \kappa_3} \int_{\kappa_1 +
   \tau}^{S + \kappa_2 + \tau} \mathcal{L} ([\varrho, \tmmathbf{u}, W])
   (G \circ S_{- s}) \,\D s \\
& =  \frac{S}{S - \kappa_3} \frac{1}{S}
   \int_0^{S} \mathcal{L} ([\varrho, \tmmathbf{u}, W]) (G \circ S_{- s}) \,\D s\\
&  - \frac{1}{S - \kappa_3} \int_0^{\kappa_1 +
   \tau} \mathcal{L} ([\varrho, \tmmathbf{u}, W]) (G \circ S_{- s}) \,\D s \\
& +  \frac{1}{S - \kappa_3} \int_{S}^{S +
   \kappa_2 + \tau} \mathcal{L} ([\varrho, \tmmathbf{u}, W]) (G \circ S_{-
   s}) \,\D s.
   \end{align*}
Using boundedness of $G$, the above has the same narrow asymptotic limit as
\[ \frac{1}{S}
   \int_0^{S} \mathcal{L} ([\varrho, \tmmathbf{u}, W]) (G \circ S_{- s}) \,\D s= \frac{1}{S} \int_0^{S} \mathcal{L} (S_s
   [\varrho, \tmmathbf{u}, W]) (G) \,\D s, \]
   which finishes he proof.
\end{proof}

As a consequence, we observe that if the narrow limit of
\[  \nu_{\tau, S_{n}} \equiv \frac{1}{S_{n}} \int_0^{S_{n}} \mathcal{L} (S_{t + \tau}
   [\varrho, \tmmathbf{u}, W]) \dt  \]
in
$\mathfrak{P} (\mathcal{T})$ as $n \rightarrow \infty$ exists for some $\tau=\tau_0 \in R$ then it exists for all $\tau \in R$ and is independent of the choice
of $\tau$. 

In view of Proposition~\ref{p:5.1} and Lemma~\ref{l:5.2} together with Jakubowski--Skorokhod's theorem, there exists a sequence $S_{n}\to\infty$ and $\nu\in\mathfrak{P} (\mathcal{T})$ so that $\nu_{0,
S_n} \rightarrow \nu$ narrowly in $\mathfrak{P} (\mathcal{T})$ as well as $\nu_{\tau,
S_n} \rightarrow \nu$ narrowly for all $\tau \in R$. Accordingly, the limit
measure $\nu$ is shift invariant in the sense that for every $G\in BC(\mathcal{T})$ and every $\tau\in R$ we have
$$
\nu(G\circ S_{\tau})=\lim_{n\to\infty}\nu_{S_{n}}(G\circ S_{\tau})=\lim_{n\to\infty}\nu_{-\tau,S_{n}}(G)=\nu(G).
$$

To conclude the proof of Theorem~\ref{thm:main}, it remains to show that $\nu$ is a law of an entire solution to \eqref{E1}--\eqref{E3a} in the sense of Definition~\ref{def:ent}.
We begin with an auxiliary proposition.

\begin{Proposition}\label{p:5.3}
  Let $[\varrho, \tmmathbf{u}, W]$ be a dissipative martingale solution on $(-T,\infty)$ defined on some probability space $(\Omega,\mathfrak{F},\prst)$. Let $S>0$ be arbitrary. Then every process $[\tilde\varrho, \tilde{\tmmathbf{u}}, \tilde W]$ defined on any probability space and having the law
  $$
\nu_{S}\equiv \frac{1}{S}\int_{0}^{S}\mathcal{L}(S_{t}[\varrho, \tmmathbf{u}, W])\dt\in\mathfrak{P}(\mathcal{T})
  $$ 
   is a  dissipative martingale solution on $(- T, \infty)$.
\end{Proposition}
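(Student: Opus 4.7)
The plan is to verify each clause of Definition~\ref{def:sol} on $[-T,\infty)$ for an arbitrary process $[\tilde\vr,\tilde\bfu,\tilde W]$ with law $\nu_S$, equipped with its canonical right-continuous augmented filtration $(\tilde{\mf}_t)_{t\geq -T}$. The guiding principle is that being a dissipative martingale solution on $[-T,\infty)$ is a property of the joint law $\mathcal{L}_{\mathcal{T}}[\vr,\bfu,W]$; once this is made precise, the claim follows because the property is preserved under convex combinations, hence under the Ces\`aro mixture defining $\nu_S$. As a preliminary step, since $[\vr,\bfu,W]$ is a dissipative martingale solution on $[0,\infty)$ and the system is autonomous, each time shift $\mathcal{S}_t[\vr,\bfu,W]$ with $t\in[0,S]$ is itself a dissipative martingale solution on $[-t,\infty)\supset[-T,\infty)$ with respect to the shifted filtration.

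The pathwise clauses (c)--(h) of Definition~\ref{def:sol} correspond to Borel subsets of $\mathcal{T}$ that carry full measure under each $\mathcal{L}_{\mathcal{T}}[\mathcal{S}_t[\vr,\bfu,W]]$, hence also under the average $\nu_S$, and therefore hold $\tilde{\mathbb{P}}$-a.s.: the bound $0\leq\tilde\vr\leq\Ov{\vr}$, the weak continuity of $\tilde\vr$ and $\tilde\vr\tilde\bfu$, the Sobolev integrability of $\tilde\bfu$, and the deterministic weak forms of the continuity and renormalized continuity equations all transfer pathwise. Adaptedness to $(\tilde{\mf}_t)$ is automatic by the choice of the canonical filtration.

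The delicate clauses are the momentum equation (i) and the energy inequality (j), whose formulations involve It\^o integrals. The plan is to recast both as martingale problems formulated purely in terms of the joint law. For $\bfvarphi\in C^1_c(Q;R^d)$, set
\[
M_{\bfvarphi}(t)=\left[\intQ{\vr\bfu\cdot\bfvarphi}\right]_{-T}^{t}-\int_{-T}^{t}\intQ{\vr\bfu\otimes\bfu\!:\!\Grad\bfvarphi+p(\vr)\Div\bfvarphi-\mathbb{S}(\Grad\bfu)\!:\!\Grad\bfvarphi-\vr\vc{g}\cdot\bfvarphi}\,\ds.
\]
Clause (i) is equivalent to $M_{\bfvarphi}$ being a continuous square-integrable $(\mf_t)$-martingale whose quadratic variation and cross-variations against each $W_k$ are those prescribed by the It\^o integrals in \eqref{N2}. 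Each of these conditions can be written as an identity of the form $\mathbb{E}[H(\mathrm{past})\,\mathfrak{I}(\mathrm{past},\mathrm{present})]=0$ for bounded continuous test functionals $H$ on $\mathcal{T}$ restricted to $[-T,s]$ and explicit polynomial increment functionals $\mathfrak{I}$, hence as a functional of $\mathcal{L}_{\mathcal{T}}[\vr,\bfu,W]$. Since each such identity holds under every $\mathcal{L}_{\mathcal{T}}[\mathcal{S}_t[\vr,\bfu,W]]$, linearity of expectation in the law transfers it to $\nu_S$; the same scheme, applied to the c\`adl\`ag energy from \eqref{conv1}, yields the supermartingale form of (j).

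The main technical point to watch is that $\tilde W$ must be a cylindrical $(\tilde{\mf}_t)$-Wiener process, not merely a process with the marginal law of a Wiener process; this follows from the law-level L\'evy characterization (Gaussian increments independent of the past with the prescribed covariance), which is preserved under averaging of laws. Combining this with the martingale identities above, a standard martingale representation argument (analogous to the one used in the existence proof) reconstructs the stochastic integrals in \eqref{N2} and \eqref{N3} on the new probability space, verifying all clauses of Definition~\ref{def:sol}.
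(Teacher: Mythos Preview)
Your argument is correct in outline, but it handles the stochastic terms by a genuinely different route than the paper. You recast clauses (i) and (j) as a martingale problem: show that the residual process $M_{\bfvarphi}$ is a square-integrable martingale with prescribed quadratic variation and cross-variations against each $W_k$, observe that these conditions are expectations of path functionals and hence linear in the law, transfer them through the Ces\`aro mixture, and finally invoke a martingale representation to identify $M_{\bfvarphi}$ with the It\^o integral on the new space. The paper bypasses the representation step entirely: it appeals to the fact (from \cite[Theorem~2.9.1]{BrFeHobook}) that an It\^o integral $\int_{\tau_1}^{\tau_2}\mathbb{G}(\vr,\vu)\,\D W$ coincides a.s.\ with a \emph{universal} Borel functional $H(\vr,\vu,W)$ on $\mathcal{T}$, independent of the underlying filtered probability space. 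This allows the authors to encode the full momentum equation \eqref{N2} (and likewise the energy inequality \eqref{N3}) as a single Borel map $F$ on $\mathcal{T}$ with $F=0$ a.s., and then to apply the averaging identity $\nu_S(F)=\frac{1}{S}\int_0^S\mathcal{L}(\mathcal{S}_t[\vr,\vu,W])(F)\,\dt$ directly. Your route is the classical martingale-problem one and is self-contained; the paper's is shorter once the path-measurability lemma for It\^o integrals is granted. One small correction: by hypothesis $[\vr,\bfu,W]$ is a solution on $(-T,\infty)$, not $[0,\infty)$, so the shift $\mathcal{S}_t[\vr,\bfu,W]$ is a solution on $(-T-t,\infty)\supset(-T,\infty)$ for every $t\in[0,S]$, which is exactly the inclusion your argument needs.
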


\begin{proof}
 Let $[\tilde\varrho, \tilde{\tmmathbf{u}}, \tilde W]$ be the process from the statement of the proposition defined on some probability space $(\Omega,{\mathfrak{F}},\prst)$ and having the
  law $\nu_{S}$. Note that such a process always exists on the canonical probability space $(\mathcal{T},\mathfrak{B}(\mathcal{T}),\nu_{S})$ or on $([0,1],\mathfrak{B}[0,1],\D y)$ by Jakubowski--Skorokhod's theorem.  We define $(\mathfrak{F}_{t})_{t\geq -T}$ as the joint canonical filtration of $[\tilde\varrho, \tilde{\tmmathbf{u}}, \tilde W]$. We intend to  show that $((\Omega,\mathfrak{F},({\mathfrak{F}}_{t})_{t\geq -T},\prst),\tilde\varrho, \tilde{\tmmathbf{u}}, \tilde W)$ is a dissipative martingale solution on $(-T,\infty)$.

By \cite[Lemma 2.1.35]{BrFeHobook}, $\tilde W$ is a cylindrical Wiener process  with respect to its canonical filtration and  $\tilde W(0)=0$.  As the next step, we want to strengthen this and show that $\tilde W$  is non-anticipative with respect to  the joint filtration $(\mathfrak{F}_{t})_{t\geq -T}$, which in view of \cite[Corollary~2.1.36]{BrFeHobook} implies that it is a cylindrical Wiener process with respect to $({\mathfrak{F}}_{t})_{t\geq -T}$, as required in Definition~\ref{def:sol}.
  To this end, we  observe that for
  any $F : \mathcal{T} \rightarrow R$ bounded Borel we have
 \begin{equation}\label{eq:convex}
 \nu_{S}(F)=\frac{1}{S}\int_{0}^{S}\mathcal{L}(S_{t}[\varrho, \tmmathbf{u}, W])(F)\dt.
\end{equation}
This is due to the fact that the time average is defined as a narrow limit of  Riemann sums
 and the extension to bounded Borel functions follows by the dominated convergence theorem.

 We know that for every $t\geq 0$ the joint canonical filtration generated by $S_{t}[\vr,\vu,W]$ is non-anticipative with respect to $S_{t}W$ in the sense that
  \begin{equation}\label{eq:2}
  \mathbb{E}\left[h_{1}\big(S_{t}[\vr,\vu,W]|_{(-T,s]}\big)h_{2}\big(S_{t}W(s+\tau)-S_{t}W(s)\big)\right]=0
  \end{equation}
  for every $s\geq -T$, every bounded continuous functions $h_{1}:\mathcal{T}|_{(-T,s]}\to R$  and $h_{2}:\mathfrak{U}_{0}\to R$ and every $\tau\geq0$. Therefore, the integrand in \eqref{eq:2} can be written as a composition of a bounded continuous function $F:\mathcal{T}\to R$ with $S_{t}[\vr,\vu, W]$, where $F$ does not depend on $t\geq0$. Using this function in \eqref{eq:convex} we obtain
  $$
  \mathbb{E}\left[h_{1}\big(\tilde\vr,\tilde\vu,\tilde W)|_{(-T,s]}\big)h_{2}\big(\tilde W(s+\tau)-\tilde W(s)\big)\right]= \nu_{S}(F)
  $$
  $$
 =\frac{1}{S}\int_{0}^{S}\mathbb{E}\left[h_{1}\big(S_{t}[\vr,\vu,W]|_{(-T,s]}\big)h_{2}\big(S_{t}W(s+\tau)-S_{t}W(s)\big)\right]\dt=0,
  $$
   where we slightly abused the notation: each expected value $\mathbb{E}$ possibly refers to a different probability measure as   the processes $[\tilde\varrho, \tilde{\tmmathbf{u}}, \tilde W]$ and $[\varrho, \tmmathbf{u}, W]$ can be defined on  different probability spaces. 
 Thus, $\tilde W$ is non-anticipative with respect to the joint filtration $(\mathfrak{F}_{t})_{t\geq0}$.

  Let $\psi\in C_c^{1}((-T,\infty);C^{1} (\overline{Q}))$ and define
  \[ F (\varrho, \tmmathbf{u}, W) = G \left( \int_{-T}^{\infty} \int_{Q} \varrho \partial_t
     \psi + \varrho \tmmathbf{u} \cdummy \nabla \psi \dx\dt \right), \]
  where $G:R\to [0,\infty)$ is continuous, bounded, $G > 0$ on $R \setminus \{ 0 \}$ and $G
  (0) = 0$. Plugging this into \eqref{eq:convex} we deduce that
  \begin{equation}\label{eq:1}
  \mathbb{E}\left[F(\tilde\vr,\tilde\vu,\tilde W)\right]=\frac{1}{S}\int_{0}^{S}\mathbb{E}\big[F(S_{t}[\varrho, \tmmathbf{u}, W])\big]\dt.
  \end{equation}
 Since $[\varrho, \tmmathbf{u}, W]$ is a solution on $(-T,\infty)$, it follows that $S_{t}[\varrho, \tmmathbf{u}, W]$ is a solution on $(-T-t,\infty)$ and, in particular, the continuity equation holds on $(-T,\infty)$. Consequently, the integrand on the right-hand side of \eqref{eq:1} vanishes for all $t\geq 0$. This implies that the continuity equation is also satisfied by $[\tilde\vr,\tilde\vu,\tilde W]$ on $(-T,\infty)$.
 The same argument applies to the renormalized continuity equation  \eqref{renorm}.
  
 For the momentum equation \eqref{N2} as well as for the energy inequality \eqref{N3}  we need to proceed differently since the corresponding stochastic integrals are generally not defined as functions on the space of trajectories $\mathcal{T}$. Recall that the momentum equation  \eqref{N2} is solved on $(-T,\infty)$ by $S_{t}[\vr,\vu,W]$ for all $t\geq 0$. Furthermore, as it was showed for instance in the proof \cite[Theorem 2.9.1]{BrFeHobook}, stochastic It\^o integrals of the form
 $$
 \int_{S}^{T}\mathbb{G}(\vr,\vu)\,\D W
 $$
can be written as a composition $H(\vr,\vu,W):\Omega\to R $ where $H:\mathcal{T}\to R$ is a measurable function which is universal in the sense that it depends on $S,T$ but is independent of the process $(\vr,\vu,W)$ provided $W$ is a cylindrical Wiener process with respect to some filtration and $\mathbb{G}(\vr,\vu)$ is stochastically integrable with respect to $W$. As a consequence, also for the momentum equation, there is a bounded Borel function $F:\mathcal{T}\to R$ such that $F(\vr,\vu,W)=0$ $\prst$-a.s. if and only if $[\vr,\vu,W]$ satisfies \eqref{N2}. This function can now be used in \eqref{eq:1} to deduce that $[\tilde\vr,\tilde\vu,\tilde W]$ satisfies \eqref{N2} on $(-T,\infty)$.
 
 A similar argument can be applied for the energy inequality as well. More precisely, we put all the terms in the energy inequality on the left hand side and write the left hand side as a composition $H(\vr,\vu,W)$ for some Borel function $H:\mathcal{T}\to R$. Then we define $F=G\circ H$ where $G:R\to R$ is bounded and continuous such that $G(z)=z^{+}$ for $z\leq 1$. Applying \eqref{eq:1} we finally conclude that the energy inequality \eqref{N3} is satisfied by $[\tilde\vr,\tilde\vu,\tilde W]$. The remaining points of Definition~\ref{def:sol} are immediate and hence the proof is complete.
\end{proof}

We recall that the probability measure $\nu$ was obtained as a narrow limit of the time averages $\nu_{\tau,S_{n}}$ for any $\tau\in R$ and a sequence $S_{n}\to\infty$.
Since $\nu$ is shift invariant, any  process with law $\nu$ is
stationary as required in Definition~\ref{def:stationary}. 
To conclude the proof of Theorem~\ref{thm:main}, it remains to prove that $\nu$ is a law of 
an entire solution to \eqref{E1}--\eqref{E3a} in the sense of Definition~\ref{def:ent}.

We first consider the measures $\nu_{\tau, S_n - \tau}$, $n=1,2, \dots$, and $\tau>0$. According to Lemma~\ref{l:5.2}, it follows that the narrow limit as $n\to\infty$ exists and
\[ \lim_{n \rightarrow \infty} \nu_{\tau, S_n - \tau} = \lim_{n \rightarrow
   \infty} \nu_{0, S_n} =\nu. \]
Recall that $[\varrho, \tmmathbf{u}, W]$ from the statement of Theorem~\ref{thm:main} solves the system  on $[0, \infty)$. As a consequence,  $S_\tau
[\varrho, \tmmathbf{u}, W]$ is a solution on $[- \tau, \infty)$.
Since
$$
\nu_{\tau,S_{n}-\tau}=\frac{1}{S_{n}-\tau}\int_{0}^{S_{n}-\tau}\mathcal{L}(S_{t}S_{\tau}[\vr,\vu,W])\dt,
$$
it follows from Proposition~\ref{p:5.3} that any process with law $\nu_{\tau,S_{n}-\tau}$ is a dissipative martingale solution to \eqref{E1}--\eqref{E3a} on $[-\tau,\infty)$.

We continue by a diagonal argument: Take a sequence $\tau_m \rightarrow \infty$ and consider $\nu_{\tau_m, S_n -
\tau_m}$, $m, n \in \mathbb{N}$. Denote by $d$ the
metric on $\mathfrak{P} (\mathcal{T})$ metrizing the weak convergence. For  $m \in \mathbb{N}$, find $n = n (m) \in \mathbb{N}$ so that
\[ d (\nu_{\tau_m, S_{n (m)} - \tau_m}, \nu) < \frac{1}{m} . \]
This gives the narrow convergence
\[ \nu_{\tau_m, S_{n (m)} - \tau_m} \rightarrow \nu \quad \tmop{as} \quad m \rightarrow
   \infty . \]
   Applying  Jakubowski--Skorokhod's theorem, we obtain a sequence of approximate processes $[\tilde{\varrho}_m, \tilde{\tmmathbf{u}}_m, \tilde{W}_m]$ converging a.s. to a process $[\tilde{\varrho}, \tilde{\tmmathbf{u}}, \tilde{W}] $ in the topology of $\mathcal{T}$. Moreover, the law of $[\tilde{\varrho}_m, \tilde{\tmmathbf{u}}_m, \tilde{W}_m]$ is $\nu_{\tau_{m},S_{n(m)}-\tau_{m}}$ and necessarily the law of $[\tilde{\varrho}, \tilde{\tmmathbf{u}}, \tilde{W}] $ is $\nu$.

Finally, we observe that $[\tilde{\varrho}_m,
\tilde{\tmmathbf{u}}_m, \tilde{W}_m]$ solving the equation on $[- \tau_m,
\infty)$  satisfies the assumptions of Theorem~\ref{thm:comp}. In particular, we shall verify that \eqref{eq:ass} holds at times $-\tau_{m}$.  Recalling \eqref{eq:61} we observe that for $S_{t + \tau_m} [\varrho,
\tmmathbf{u}, W]$ we have for all $s > - t - \tau_m$
\[ \mathbb{E} \left[ \left( \int_Q E (\varrho, \varrho \tmmathbf{u}) (s + t +
   \tau_m) \,\D x\right)^m \right] \lesssim \mathbb{E} \left[ \left( \int_Q E (\varrho, \varrho \tmmathbf{u}) (0) \,\D x\right)^m \right] + c.
    \]
Hence in particular the $m$th moment of the energy of $S_{t + \tau_m}
[\varrho, \tmmathbf{u}, W]$ at time $s = - \tau_m$ is controlled this way. Since the law of $[\tilde{\varrho}_m, \tilde{\tmmathbf{u}}_m, \tilde{W}_m]$ is given by
$ \nu_{\tau_m, S_{n(m)} - \tau_m}$,
it follows
\begin{align*}
\mathbb{E} \left[ \left( \int_Q E (\tilde{\varrho}_m, \tilde{\varrho}_m
   \tilde{\tmmathbf{u}}_m) (- \tau_m) \right)^m \right] &= \frac{1}{S_{n(m)} -
   \tau_m} \int_0^{S_{n(m)} - \tau_m} \mathbb{E} \left[ \left( \int_Q E (\varrho,
   \varrho \tmmathbf{u}) (t) \right)^m \right] \dt\\
  & \lesssim  \mathbb{E} \left[ \left( \int_Q E (\varrho, \varrho \tmmathbf{u}) (0) \,\D x\right)^m \right] + c, 
  \end{align*}
which yields {\eqref{eq:ass}}. More precisely, this follows by applying \eqref{eq:convex} to a bounded truncation of the energy and then passing  to the limit.

Thus, by Theorem~\ref{thm:comp}, $[\tilde{\varrho}, \tilde{\tmmathbf{u}}, \tilde{W}] $ is an entire solution to \eqref{E1}--\eqref{E3a}. This completes the
proof of Theorem~\ref{thm:main}.

\section{Ergodic structure}\label{s:erg}

In this section we study the ergodic structure of the system \eqref{E1}--\eqref{E3a}. In particular, we show that each dissipative martingale solution as in Theorem~\ref{thm:main} gives raise to an ergodic stationary solution on the closure of its limit set. This is the best we can say at the moment, as we generally do not expect stationary solutions to  \eqref{E1}--\eqref{E3a} to be unique. As a matter of fact, already the deterministic counterpart of \eqref{E1}--\eqref{E3a} may admit infinitely many equilibrium states for a given total mass.

For a dissipative martingale solution $[\varrho, \vu,W]$ satisfying \eqref{hypo1}, we define the $\omega$--limit set as a subset of $ \mathfrak{P}
   (\mathcal{T})$ given by
\[
\begin{aligned} 
\Xi [\varrho, \vu,W] &= \big\{ \mathcal{L}_{\mathcal{T}}[r, \vc{w},B];\, 
\mbox{there exists}\ T_{n}\to\infty  \ \mbox{so that}\\
&\qquad\qquad\qquad\qquad\, S_{T_{n}}[\varrho, \vu,W]
    \rightarrow [r, \vc{w},B]\,  \mbox{in law in} \, \mathcal{T} \big\} . 
\end{aligned}
\]
In addition, according to Theorem~\ref{thm:bound} and Theorem~\ref{thm:comp}, the $\omega-$limit set $\Xi[\vr,\vu,W]$ is a non-empty set of laws of globally bounded entire solutions, which is shift  invariant and compact. Moreover, we observe that  similarly to the proof of Proposition~\ref{p:5.3} (see also \cite[Theorem 2.9.1]{BrFeHobook}), it holds that every process $[r,\vc{w},B]$ having the law of an entire solution  is an entire solution itself.
   
Let $\overline{\rm{co}} (  \Xi [\varrho, \vu,W])$ denote the closure of the convex hull of $\Xi [\varrho, \vu,W]$  with respect to the narrow convergence of probability measures.
Theorem~\ref{thm:main} then implies the following.

\begin{Corollary}\label{c:6.1}
 For every dissipative martingale solution $[\varrho, \vu,W]$ as in Theorem~\ref{thm:main}, there exists a stationary solution whose law
  belongs to $\overline{\rm{co}} (  \Xi [\varrho, \vu,W])$.
  \end{Corollary}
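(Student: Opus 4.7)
My plan is to re-use the stationary solution $[\tvr,\tvu,\tilde{W}]$ provided by Theorem~\ref{thm:main} and verify that its law already lies in $\overline{\rm{co}}(\Xi[\vr,\vu,W])$. Setting
$$
\mu_{t} := \mathcal{L}_{\mathcal{T}}\left[S_{t}[\vr,\vu,W]\right], \qquad \nu_{T} := \frac{1}{T}\int_{0}^{T} \mu_{t}\,\dt,
$$
Theorem~\ref{thm:main} produces $T_{n}\to\infty$ and a stationary solution whose law $\nu = \mathcal{L}_{\mathcal{T}}[\tvr,\tvu,\tilde{W}]$ is the narrow limit of $\nu_{T_{n}}$ in $\mathfrak{P}(\mathcal{T})$. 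It thus suffices to show $\nu \in \overline{\rm{co}}(\Xi[\vr,\vu,W])$.

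The first ingredient is that $\mu_{t}$ approaches $\Xi[\vr,\vu,W]$ as $t\to\infty$ in the narrow topology. The estimates used in the proof of Proposition~\ref{p:5.1} are uniform in the time shift, so the family $\{\mu_{t}\}_{t\geq 0}$ is tight in $\mathfrak{P}(\mathcal{T})$. By Prokhorov's theorem any sequence $\mu_{t_{n}}$ with $t_{n}\to\infty$ admits a narrowly convergent subsequence, whose limit belongs to $\Xi[\vr,\vu,W]$ by the very definition of the $\omega$-limit set. Equivalently, every narrow neighborhood $\mathcal{U}$ of $\Xi[\vr,\vu,W]$ contains $\mu_{t}$ for all $t$ large enough.

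To finish, I argue by Hahn--Banach separation. Assume, towards a contradiction, that $\nu \notin \overline{\rm{co}}(\Xi[\vr,\vu,W])$. Viewing $\mathfrak{P}(\mathcal{T})$ as a convex subset of the locally convex space of signed Radon measures on $\mathcal{T}$ equipped with the weak-$*$ topology induced by the pairing with $C_{b}(\mathcal{T})$, the geometric Hahn--Banach theorem yields $F \in C_{b}(\mathcal{T})$, $\alpha \in \mr$ and $\delta > 0$ such that
$$
\int_{\mathcal{T}} F\,\dd\nu \geq \alpha + \delta, \qquad \int_{\mathcal{T}} F\,\dd\mu \leq \alpha \quad \text{for every } \mu \in \Xi[\vr,\vu,W].
$$
From the preparatory step, $\limsup_{t\to\infty} \int F\,\dd\mu_{t} \leq \alpha$: otherwise some subsequence $\mu_{t_{n}}$ would satisfy $\int F\,\dd\mu_{t_{n}} > \alpha + \delta/2$ and tightness would deliver a cluster point in $\Xi[\vr,\vu,W]$ violating the bound above. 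The Riemann-sum construction of the time average (see the paragraph after \eqref{eq:6}) together with the boundedness of $F$ yields
$$
\int_{\mathcal{T}} F\,\dd\nu_{T_{n}} = \frac{1}{T_{n}}\int_{0}^{T_{n}} \int_{\mathcal{T}} F\,\dd\mu_{t}\,\dt,
$$
so dominated convergence forces $\limsup_{n} \int F\,\dd\nu_{T_{n}} \leq \alpha$. But $\nu_{T_{n}}\to\nu$ narrowly also gives $\int F\,\dd\nu_{T_{n}} \to \int F\,\dd\nu \geq \alpha + \delta$, a contradiction. Stationarity of any process with law $\nu$ is already ensured by Theorem~\ref{thm:main}.

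The main anticipated obstacle is the clean justification of Hahn--Banach separation in $\mathfrak{P}(\mathcal{T})$, since $\mathcal{T}$ is only Polish and not locally compact; a safe alternative is to localize inside a tight subset of $\mathfrak{P}(\mathcal{T})$, on which the narrow topology is metrized by the Prokhorov metric, and perform the separation on its closed convex hull, which is compact by Prokhorov's theorem.
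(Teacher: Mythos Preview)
Your argument is correct and in fact yields a slightly sharper statement than the paper's: you show that the \emph{same} stationary solution produced by Theorem~\ref{thm:main} already has its law in $\overline{\rm{co}}(\Xi[\vr,\vu,W])$, whereas the paper only claims existence of \emph{some} stationary solution with that property.

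The paper's route is different and avoids the Hahn--Banach step entirely. It picks any law in $\Xi[\vr,\vu,W]$, realizes it by a globally bounded entire solution $[r,\vc{w},B]$, and then reruns the Krylov--Bogoliubov construction of Section~\ref{cs} with $[r,\vc{w},B]$ in place of the original $[\vr,\vu,W]$. The key structural observation is that $\Xi[\vr,\vu,W]$ is shift invariant, so every $\mathcal{L}_{\mathcal{T}}(S_t[r,\vc{w},B])$ already lies in $\Xi[\vr,\vu,W]$; the Riemann-sum representation of the time averages then forces the resulting stationary law into $\overline{\rm{co}}(\Xi[\vr,\vu,W])$ automatically. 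Your approach trades this observation for the soft Ces\`aro/separation argument, which is perfectly valid (the narrow topology is the locally convex topology $\sigma(\cdot,C_b(\mathcal{T}))$, so continuous linear functionals are precisely integrations against bounded continuous functions, and your worry at the end is not an obstacle). The paper's proof is shorter and more self-contained; yours gives the additional information that no new application of Krylov--Bogoliubov is needed.
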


\begin{proof}
As discussed above, the $\omega-$limit set $\Xi[\vr,\vu,W]$ consists of laws of globally bounded entire solutions. Let $[r,\vc{w},B]$ be a process  whose law belongs to $\Xi[\vr,\vu,W]$. Then $[r,\vc{w},B]$ is itself a globally defined entire solution. Applying  the construction of a stationary solution from Section~\ref{cs} to $[r,\vc{w},B]$ instead of $[\vr,\vu,W]$, we obtain a shift-invariant measure given by a narrow limit of the form
$$
\nu=\lim_{S_{n}\to\infty}\frac{1}{S_{n}}\int_{0}^{S_{n}}\mathcal{L}_{\mathcal{T}}(S_{t}[r,\vc{w},B])\dt=\lim_{S_{n}\to\infty}\lim_{N\to\infty}\frac{1}{N}\sum_{i=0}^{N}\mathcal{L}_{\mathcal{T}}(S_{t_{i}}[r,\vc{w},B]),
$$
where the second equality is a Riemann sum approximation for an equidistant partition $\{0=t_{0}<\cdots<t_{N}=S_{n}\}$. Consequently, $\nu$ belongs to the closure of the convex hull of the laws $\mathcal{L}_{\mathcal{T}}(S_{t}[r,\vc{w},B])$, $t\geq0$, which all belong to $\Xi[\vr,\vu,W]$.
\end{proof}

Our final result shows that for every dissipative martingale solution satisfying \eqref{hypo1}, there is an associated ergodic stationary solution.

\begin{Definition} [Ergodic stationary statistical solution] \label{AD1}
A stationary statistical solution $[\vr, \vm,W]$, or its law $\mathcal{L}[\vr, \vm,W]$ on $\mathcal{T}$, is called {ergodic}, if the $\sigma$-field 
of shift invariant sets is trivial, specifically, 
\[
\mathcal{L}_{\mathcal{T}}[\vr, \vm,W] (B) = 1 \ \mbox{or}  \ \mathcal{L}_{\mathcal{T}}[\vr, \vm,W](B) = 0 \ \mbox{for any shift invariant Borel set}\ B \in \mathfrak{B}[\mathcal{T}].
\]
\end{Definition}

\begin{Theorem}\label{t:6.3}
For every dissipative martingale solution $[\varrho, \vu,W]$ as in Theorem~\ref{thm:main}, there exists an ergodic stationary solution whose law belongs $\overline{\rm{co}} (  \Xi [\varrho, \vu,W])$.
\end{Theorem}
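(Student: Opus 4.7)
The plan is to apply the Krein--Milman theorem to a suitable compact convex set of shift-invariant laws contained in $\overline{\rm co}(\Xi[\vr,\vu,W])$, and then to verify that any extreme point is necessarily ergodic. Write $\Xi = \Xi[\vr,\vu,W]$ for brevity.

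Set $\mathcal{I}=\overline{\rm co}(\Xi)\cap\mathfrak{P}_{s}(\mathcal{T})$, where $\mathfrak{P}_{s}(\mathcal{T})$ denotes the set of shift-invariant Borel probability measures on $\mathcal{T}$. The set $\mathcal{I}$ is non-empty by Corollary~\ref{c:6.1}, convex as the intersection of two convex sets, and narrowly compact: indeed $\overline{\rm co}(\Xi)$ is closed by construction and tight because its elements are laws of entire solutions whose energy moments are controlled uniformly in time thanks to Theorems~\ref{thm:bound} and~\ref{thm:comp} (tightness passes to the convex hull and its narrow closure), while shift-invariance is preserved under narrow limits since the shifts $\{S_{\tau}\}$ act continuously on $\mathcal{T}$. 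The Krein--Milman theorem then produces an extreme point $\nu^{*}\in\mathcal{I}$, and any process with law $\nu^{*}$ is a stationary entire solution by Proposition~\ref{p:5.3} and the final argument of Section~\ref{cs}.

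The crux is to show that $\nu^{*}$ is ergodic. I would argue by contradiction: assume that there is a shift-invariant Borel set $B\subset\mathcal{T}$ with $0<\nu^{*}(B)<1$, and form the conditional probability measures
\[
\mu_{1}=\frac{\mathbf{1}_{B}\cdot\nu^{*}}{\nu^{*}(B)},\qquad \mu_{2}=\frac{\mathbf{1}_{B^{c}}\cdot\nu^{*}}{\nu^{*}(B^{c})}.
\]
These are distinct and shift-invariant (since $B$ is), and satisfy $\nu^{*}=\nu^{*}(B)\mu_{1}+\nu^{*}(B^{c})\mu_{2}$. To contradict the extremality of $\nu^{*}$ in $\mathcal{I}$ it remains to show that $\mu_{1},\mu_{2}\in\overline{\rm co}(\Xi)$, after which no such $B$ can exist and $\nu^{*}$ is ergodic in the sense of Definition~\ref{AD1}.

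I expect this last inclusion to be the main obstacle. What one gets for free is $\supp(\mu_{i})\subset A$, where $A=\overline{\bigcup_{\rho\in\Xi}\supp(\rho)}$ is closed and shift-invariant, because every element of $\overline{\rm co}(\Xi)$ is supported on $A$ and conditioning preserves support; but mere support in $A$ does not imply membership in $\overline{\rm co}(\Xi)$. To bridge the gap I would use a Choquet-type barycentric representation: each $\nu\in\overline{\rm co}(\Xi)$ can be written as $\int\rho\,\D m(\rho)$ for some probability $m$ on $\Xi$, and since $\Xi$ is itself shift-invariant, $m$ may be chosen shift-invariant without changing the barycentre (by averaging along the shift action on $\Xi$). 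One then lifts the partition $\{B,B^{c}\}$ to a splitting $m=\alpha m_{1}+(1-\alpha)m_{2}$ of probability measures on $\Xi$ whose barycentres equal $\mu_{1}$ and $\mu_{2}$, placing both in $\overline{\rm co}(\Xi)$. A cleaner alternative avoiding Krein--Milman would be to start from the stationary solution $\nu$ produced in Corollary~\ref{c:6.1}, apply the ergodic decomposition $\nu=\int\rho\,\D\lambda(\rho)$, and use the same integral-representation mechanism to show that $\lambda$-almost every ergodic component $\rho$ belongs to $\overline{\rm co}(\Xi)$, producing an ergodic element of $\mathcal{I}$ directly.
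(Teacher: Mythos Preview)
Your argument coincides with the paper's: the paper works with the same set (denoted $\mathcal{A}$ there; it equals your $\mathcal{I}$ since every element of $\overline{\rm co}(\Xi)$ is already the law of an entire solution), checks that it is non-empty via Corollary~\ref{c:6.1}, convex, tight and closed, applies Krein--Milman, and then invokes the classical contradiction argument (page~30 of Da~Prato--Zabczyk) for ergodicity of an extreme point. You go further than the paper by isolating the genuine subtlety---that the conditionals $\mu_1,\mu_2$ must land back in $\overline{\rm co}(\Xi)$ (and remain laws of solutions)---and by sketching Choquet/ergodic-decomposition routes to close it; the paper does not elaborate on this point beyond the citation.
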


\begin{proof}
Consider the set $\mathcal{A}$ of all stationary solutions whose law belongs to  $\overline{\rm{co}} (  \Xi [\varrho, \vu,W])$. Note that not all probability measures in $\overline{\rm{co}} (  \Xi [\varrho, \vu,W])$ are stationary, but by Corollary~\ref{c:6.1}, such a stationary solution exists, i.e. $\mathcal{A}$ is non-empty. 
Since  a convex combination of laws of entire solutions is an entire solution by the approach of Proposition~\ref{p:5.3}, $\mathcal{A}$ is convex. Due to the uniform boundedness from Theorem~\ref{thm:bound},  Theorem~\ref{thm:comp} implies that $\mathcal{A}$ is tight and closed.
Thus, by Krein--Milman's theorem, there is an extremal point of $\mathcal{A}$, which is the law of a stationary solution. Then by a classical contradiction argument (see e.g. page 30 in \cite{DPZ96}) it can be proved that this law  is ergodic.
  \end{proof}

\appendix

\section{Existence of solutions to the initial value problem}

We have the following existence result.

\begin{Theorem} \label{thm:existence}

Let $k>\tfrac{N}{2}$ and
let $\Lambda $ be a Borel probability measure defined on the space $W^{-k,2}(\Q) \times W^{-k,2}(\Q,R^d)$ such that
\begin{align*}
\Lambda&\big\{L^1(\Q) \times L^1(\Q,R^d)\big\}=1,\
\Lambda \{ \vr \geq 0 \} = 1, \\ &\quad\Lambda \bigg\{ 0 < \vr_{\mathrm{min}} \leq \intQ{ \vr } \leq \vr_{\mathrm{max}} < \infty \bigg\} = 1,
\end{align*}
for some deterministic constants $\vr_{\mathrm{min}}$, $\vr_{\mathrm{max}}$, and
\[
\int_{L^1_x \times L^1_x}  \left|\, \intQ{ \left[ \frac{1}{2} \frac{|\vc{q} |^2}{\vr} + P(\vr) \right] }
\right|^{r_0}  \D \Lambda \leq c
\]
for some $r_0\geq4$.
Let the diffusion coefficients $\mathbb{F} = (\mathbf{F}_{k})_{k\in\N}$ be continuously differentiable
satisfying \eqref{p12} and suppose that $\bfg$ is continuous satisfying \eqref{p12bis}.
Then there is a dissipative martingale solution to \eqref{E1}--\eqref{E3} in the sense of Definition \ref{def:sol} with $\Lambda=\mathcal{L}[\vr(0),\vr\bfu(0)]$. The solution satisfies uniformly in time
\begin{equation} \label{i2}
 \vr_{\mathrm{min}}  \leq \int_{\mt}\varrho(t,\cdot)\dx \leq \vr_{\mathrm{max}}  \quad \mathbb P\mbox{-a.s.}
\end{equation}
\end{Theorem}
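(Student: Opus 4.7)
The plan is to construct a dissipative martingale solution via a three-level approximation scheme in the spirit of \cite{BrFeHobook}: a Galerkin truncation for the momentum equation, an artificial viscosity term $\ep \Del \vr$ in the continuity equation to compensate the loss of regularity in the hyperbolic transport, and a regularization $p_\delta(\vr) = p(\vr)\chi_\delta(\vr) + \delta \vr^\Gamma$ of the hard--sphere pressure that is bounded for $\vr < \overline{\vr}$ but penalises large densities with a polynomial of high exponent $\Gamma$. I would first sample the initial condition: using Jakubowski--Skorokhod I realise $\Lambda$ as the law of a random variable $(\vr_0, \vc{q}_0)$ on some probability space, smooth it out so that $0 < \underline{r} \le \vr_0 \le \overline{\vr} - \eta$, $\vc{q}_0 / \vr_0 \in W^{1,2}_0$, and add a small noise to recover $\Lambda$ in the final limit.

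At the innermost level, the Galerkin velocity $\vu_N \in X_N$ satisfies a finite--dimensional It\^o SDE coupled to a parabolic continuity equation with artificial viscosity. Existence on a random time interval follows from the fixed point argument of \cite[Chapter~4]{BrFeHobook}, and an It\^o formula applied to the approximate energy functional (using \eqref{p12}--\eqref{p12bis} and Gronwall) gives global existence together with uniform moment estimates
\[
\expe{ \sup_{t\in [0,T]} \mathcal{E}_{N,\ep,\delta}(t)^{r_0} } + \expe{ \left( \int_0^T \intQ{ |\Grad \vu|^2 } \dt \right)^{r_0/2} } \le c(T,\Lambda).
\]
Then I let $N \to \infty$, then $\ep \to 0$, then $\delta \to 0$, at each stage using Jakubowski--Skorokhod's representation to obtain a.s.~convergence of a new sequence of approximations and the standard machinery for identifying the stochastic integrals (the stochastic integral as a measurable functional on the trajectory space, see \cite[Chapter~2]{BrFeHobook}). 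Compactness of the densities at the Galerkin level comes from parabolic regularity; at the $\ep \to 0$ stage it is obtained via the effective viscous flux identity and the DiPerna--Lions renormalization of the continuity equation.

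The main obstacle is the vanishing pressure regularisation $\delta \to 0$, where I must recover both the hard--sphere bound $\vr \le \overline{\vr}$ and the strong convergence of $\vr_\delta$. To control the pressure up to the boundary I would test the momentum balance with $\mathcal{B}[ (\overline{\vr} - \vr_\delta)^{-\omega} - \ave{(\overline{\vr}-\vr_\delta)^{-\omega}} ]$ as in Section~\ref{sec:pressureestimates}, yielding a uniform bound of $p_\delta(\vr_\delta)(\overline{\vr}-\vr_\delta)^{-\omega}$ in $L^1_{t,x}$ and hence via \eqref{p1} weak equi--integrability of $p_\delta(\vr_\delta)$ and the bound $\vr \le \overline{\vr}$ in the limit; in particular the extra term $\delta \vr_\delta^\Gamma$ disappears. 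Strong convergence of the densities follows by combining the stochastic analogue of the Lions identity \eqref{co13} (justified in \cite[Section 4.5]{BrFeHobook}) with the pathwise oscillation defect argument already used in Section~\ref{s:4.4}: the defect
\[
D(t) = \intQ{ \Big[ \overline{\vr \log \vr} - \vr \log \vr \Big] }
\]
satisfies a differential inequality of the form \eqref{co15}, and this time $D(0) = 0$ because the initial data converge strongly, which forces $D \equiv 0$ throughout.

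Finally, to recover the energy inequality \eqref{N3} in the limit I would apply It\^o's formula to $\mathcal{E}_{N,\ep,\delta}$ at each level and pass to the limit using lower semicontinuity of the quadratic terms under weak convergence together with strong convergence of $\vr_\delta$; the mass balance \eqref{i2} is preserved at every stage by integrating the continuity equation. Adaptedness of the limit objects is deduced from the joint canonical filtration via \cite[Theorem~2.9.1]{BrFeHobook}. Combining all four limits produces a dissipative martingale solution with $\mathcal{L}[\vr(0), \vr\vu(0)] = \Lambda$ as required.
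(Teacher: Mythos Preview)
Your proposal is correct, but the paper takes a shorter route. Instead of rebuilding the full three--level scheme (Galerkin, artificial viscosity, pressure regularisation), the paper introduces only a single approximation parameter: the hard--sphere pressure is replaced by a truncated pressure $p_\alpha$ that coincides with $p$ on $[0,\overline{\vr}-\alpha]$ and grows like $\vr^\gamma$ for large $\vr$. Since $p_\alpha$ is a monotone pressure with polynomial asymptotics, the existence of a dissipative martingale solution $(\vr_\alpha,\vu_\alpha)$ with pressure $p_\alpha$ is quoted directly from \cite[Thm.~4.0.2]{BrFeHobook} (with the boundary--condition modification of \cite{BF,Smith}), and only the single limit $\alpha\to 0$ must be performed. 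The remaining steps --- the energy inequality giving uniform moments, the Bogovskii test function $\mathcal{B}[(\overline{\vr}-\vr_\alpha)^{-\omega}-\langle(\overline{\vr}-\vr_\alpha)^{-\omega}\rangle]$ yielding the higher pressure integrability \eqref{apriori2}, Jakubowski--Skorokhod, and the effective viscous flux/renormalization argument for strong convergence of the density --- are exactly as you describe and match Section~\ref{sec:pressureestimates} and \cite[Sec.~3.6]{FZ}. Your approach buys self--containedness at the price of redoing three limits that the paper absorbs into one citation; the paper's approach is much more economical but relies on the reader accepting that the cited existence theory extends verbatim to the present setting (Dirichlet boundary, deterministic force $\vc{g}$, general monotone pressure).
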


\begin{proof}
We follow \cite[Sec. 3]{FZ} and consider for $\alpha>0$ small the approximate pressure $p_\alpha$ given by
\begin{align*}
p_\alpha(\varrho)=\begin{cases}p(\varrho)\qquad, \,\,\,\varrho\leq \overline\varrho-\alpha\\
p(\overline\varrho-\alpha)+\big([\varrho-\overline\varrho-1]^+\big)^\gamma,\,\,\, \varrho\geq \overline\varrho-\alpha
\end{cases}
\end{align*}
where $\gamma>3$.
The existence of a dissipative martingale solution\footnote{Without loss of generality we can assume that the probability space and the Wiener process do not depend on $\alpha$.}
 $$\big((\Omega,\mf,(\mf_t)_{t\geq0},\prst_\alpha),\varrho_\alpha,\bfu_\alpha,W)$$
 to  \eqref{E1}--\eqref{E2} in the sense of Definition \ref{def:sol} with the pressure $p_\alpha$
follows from \cite[Thm. 4.0.2.]{BrFeHobook} (see also \cite[Thm. 2.4]{BrHo}). Although only a pressure of the form $a\varrho^\gamma$ with $a>0$ is treated in \cite{BrFeHobook}, it is clear that the same arguments apply for any monotone pressure function which behaves asymptotically as $\varrho^\gamma$.
Also note that \cite{BrFeHobook} deals only with periodic boundary conditions. However, as demonstrated in \cite{BF} for the full Navier--Stokes--Fourier system the approach also applies to the case of bounded domains with Dirichlet boundary conditions for the velocity field (see also \cite{Smith}).\\
From the energy inequality \eqref{N3} we obtain
\begin{align}\label{apriori1}
& \mathbb E\bigg[\sup_{0\leq t\leq T}\left[ \intQ{ \left( \frac{1}{2} \vr_\alpha |\vu_\alpha|^2 + P_\alpha(\vr_\alpha) \right) } \right]^n\bigg]
\\&+ \mathbb E\bigg[\int_0^T \left( \left[ \intQ{ \left( \frac{1}{2} \vr_\alpha |\vu_\alpha|^2 + P_\alpha(\vr_\alpha) \right) } \right]^{ n - 1}\nonumber
\intQ{ \mathbb{S}(\Grad \vu_\alpha) : \Grad \vu_\alpha } \right) \,\dd\tau\bigg]\\&\leq\,c(n,T)\nonumber
\end{align}
uniformly in $\alpha$ for all $T>0$ and all $n=1,\dots, r_0$. Next we aim at establishing uniform bounds for the pressure. As in \eqref{i5} we obtain for any $T>0$
\begin{align}
\begin{aligned}
\int_{0}^{T} &\intQ{ p_\alpha(\vr_\alpha) \beta(\vr_\alpha) } \dt =
\left[ \intQ{ \vr_\alpha \vu_\alpha \cdot \mathcal{B} \left[ \beta(\vr) - \frac{1}{|Q|} \intQ{ \beta(\vr_\alpha) } \right] }  \right]_{t= 0}^{t = T}\\&+ \frac{1}{|Q|} \int_{\tau_1}^{\tau_2} \left( \intQ{ p_\alpha(\vr_\alpha) } \intQ{ \beta(\vr_\alpha) } \right) \dt \\
&- \int_{0}^{T} \intQ{  \vr_\alpha \vu_\alpha \otimes \vu_\alpha : \Grad \mathcal{B} \left[ \beta(\vr_\alpha) - \frac{1}{|Q|} \intQ{ \beta(\vr_\alpha) }  \right] } \dt
\\
&+ \int_{0}^{T} \intQ{  \mathbb{S} ( \Grad \vu_\alpha ) : \Grad \mathcal{B} \left[\beta(\vr_\alpha) - \frac{1}{|Q|} \intQ{ \beta(\vr_\alpha) }  \right] } \dt \\
& - \int_{0}^{T} \intQ{ \vr_\alpha \vu_\alpha \cdot \mathcal{B} \Big[ \Div (\beta(\vr_\alpha) \vu_\alpha) \Big] } \dt
\\
&+ \int_{0}^{T} \intQ{ \vr_\alpha \vu_\alpha \cdot \mathcal{B} \left[ \left( \beta(\vr_\alpha) - \beta'(\vr_\alpha) \vr_\alpha \right) \Div \vu_\alpha 
- A_\alpha  \right] } \dt
\\
&- \int_{0}^{T} \left( \intQ{  \vr_\alpha \vc{g}(\vr_\alpha, \vu_\alpha) \cdot \mathcal{B} \left[ \beta(\vr_\alpha) - \frac{1}{|Q|} \intQ{ \beta(\vr_\alpha) }  \right] } \right) \dt\\
&- \int_{0}^{T} \left( \intQ{  \vr_\alpha \vc{F}(\vr_\alpha, \vu_\alpha) \cdot \mathcal{B} \left[ \beta(\vr_\alpha) - \frac{1}{|Q|} \intQ{ \beta(\vr_\alpha) }  \right] } \right) \D W
\end{aligned}
 \label{i5A}
\end{align}
with $A_\alpha=\frac{1}{|Q|}\int_Q \left( \beta(\vr_\alpha) - \beta'(\vr_\alpha) \vr_\alpha \right) \Div \vu_\alpha \dx$.
After taking expectations the stochastic integral vanishes whereas all the other terms can be estimated as in the deterministic case based on \eqref{apriori1} for the choice $\beta(\varrho)=(\overline\vr-\vr)^{-\omega}$ with $\omega>0$ small. Recalling the arguments from Section \ref{sec:pressureestimates}, in particular \eqref{co3}, we can bound all terms on the right-hand side and obtain consequently
\begin{align}\label{apriori2}
\mathbb E\left\| p(\vr_n) \right\|^{ \frac{ \beta + \omega }{\beta} }_{L^ \frac{ \beta + \omega }{\beta}((0,T) \times Q) }
\aleq \left( 1 + \mathbb E\int_{0}^T \intQ{ p(\vr_n) (\Ov{\vr} - \vr_n )^{-\omega} } \dt  \right)\leq\,c(T)
\end{align}
for all $T>0$ using also \eqref{apriori1} for $n=1$.

 With estimates \eqref{apriori1} and \eqref{apriori2} at hand one can apply the stochastic compactness method based
 on the Jakubowski--Skorokhod representation theorem exactly as in
 \cite[Chapter 4.4]{BrFeHobook}. Also, we can pass
 to the limit in all terms in the equations and the energy inequality apart from the pressure 
arguing as in \cite[Chapter 4.4]{BrFeHobook}. Note that, since \eqref{apriori2} implies higher integrability of the pressure, the method from \cite{Li2} applies directly as explained in \cite[Sec. 3.6]{FZ}. 
This is in fact a purely deterministic argument and the only difference to \cite[Chapter 4.4]{BrFeHobook} is that we need to localise the effective viscous flux identity. We conclude that
$\tilde p=p(\tilde\varrho)$ which finishes the proof.
\end{proof}

\begin{Remark}\label{rem}
As can be seen from the proof, the solution constructed in Theorem \ref{thm:existence} satisfies
\begin{align*}
\mathbb E\int_0^T\int_{Q}|p(\varrho)|^{\frac{ \beta + \omega }{\beta}}\dxt\leq \,c(T)
\end{align*}
for all $T>0$ with some $\beta>0$.
\end{Remark}

\bibliography{citace}
\bibliographystyle{plain}

\end{document}